\documentclass[11pt]{article}
\usepackage[letterpaper,margin=1in]{geometry}
\usepackage[T1]{fontenc}
\usepackage{lmodern}
\usepackage{amsmath,amssymb,amsthm,mathtools}
\usepackage{booktabs,array,tabularx}
\usepackage[round,authoryear]{natbib}
\usepackage{microtype}
\usepackage{enumitem}
\usepackage{xcolor}

\usepackage{amsmath,amsfonts,bm}

\def\eqref#1{Equation~(\ref{#1})}

\def\1{\bm{1}}

\def\eps{{\epsilon}}

\DeclareMathAlphabet{\mathsfit}{\encodingdefault}{\sfdefault}{m}{sl}
\SetMathAlphabet{\mathsfit}{bold}{\encodingdefault}{\sfdefault}{bx}{n}

\newcommand{\E}{\mathbb{E}}

\newcommand{\R}{\mathbb{R}}

\usepackage{amsmath,amssymb,amsthm,mathtools}
\usepackage{booktabs,array,tabularx}
\usepackage{enumitem}

\usepackage[colorlinks,linkcolor=red,urlcolor=red,citecolor=blue,backref]{hyperref}

\setlist[enumerate]{leftmargin=*,itemsep=2pt,topsep=3pt}
\newtheorem{theorem}{Theorem}[section]
\newtheorem{proposition}[theorem]{Proposition}
\newtheorem{lemma}[theorem]{Lemma}
\newtheorem{corollary}[theorem]{Corollary}
\theoremstyle{definition}
\newtheorem{definition}[theorem]{Definition}
\newtheorem{remark}[theorem]{Remark}
\newcommand{\Prob}{\mathbb P}
\newcommand{\norm}[1]{\left\lVert #1\right\rVert}
\newcommand{\ip}[2]{\left\langle #1,#2\right\rangle}
\newcommand{\Lb}{\overline L}

\newcommand{\EE}{\mathbb E}

\usepackage{booktabs,array,tabularx,multirow,threeparttable}
\newcommand{\methodcell}[2]{%
  \begingroup
  \renewcommand{\arraystretch}{1}%
  \begin{tabular}[c]{@{}l@{}}
    #1\\[-1pt]
    {\footnotesize #2}
  \end{tabular}%
  \endgroup
}

\title{\bf Tight Lower Bounds for Stochastic Nonconvex-Strongly-Concave Minimax Optimization}

\author{
Siqi Zhang \thanks{School of Management and Engineering, Nanjing University, \texttt{siqi.zhang@nju.edu.cn}}
\and
Qilong Wu \thanks{School of Data Science, the Chinese University of Hong Kong (Shenzhen), \texttt{qilongwu@link.cuhk.edu.cn}}
\and
Junchi Yang \thanks{School of Data Science, the Chinese University of Hong Kong (Shenzhen), \texttt{yangjunchi@cuhk.edu.cn}}
}
\date{\today}

\begin{document}

\maketitle

\begin{abstract}
We study the stochastic first-order oracle complexity of finding $\epsilon$-stationary points of the primal function in smooth nonconvex-strongly-concave minimax optimization. For sufficiently small $\epsilon$, we establish lower bounds of $\Omega(\kappa L\Delta\sigma^2\epsilon^{-4})$ under the bounded-variance assumption and $\Omega(\kappa^{3/2}\bar L\Delta\sigma\epsilon^{-3})$ under the additional assumption of averaged smoothness. Here, $L$ and $\bar L$ denote the smoothness and averaged-smoothness constants, respectively, $\Delta$ is the initial primal gap, $\sigma^2$ bounds the oracle variance, and $\kappa=L/\mu$ or $\bar L/\mu$ in the respective settings, where $\mu$ is the strong-concavity parameter. Our bounded-variance lower bound improves the dependence on the condition number from $\kappa^{1/3}$ in previous lower bounds to $\kappa$, while our averaged-smoothness lower bound is the first of its kind. In both settings, the resulting lower bounds match existing upper bounds in their dependence on $\kappa$ and $\epsilon$. Our proofs are based on a unified quadratic lifting construction that transfers a hardness instance for stochastic nonconvex minimization to unconstrained minimax optimization while preserving the required variance and smoothness properties.
\end{abstract}

\section{Introduction}\label{sec:intro}
We consider the nonconvex-strongly-concave (NC-SC) minimax problem
\begin{equation}\label{eq:problem}
 \min_{x\in\R^{d_x}}\Phi(x),\qquad
 \Phi(x):=\max_{y\in\R^{d_y}} f(x,y),
\end{equation}
where $f$ has an $L$-Lipschitz gradient and $f(x,\cdot)$ is $\mu$-strongly concave. Problems of this form arise in a variety of machine learning applications~\citep{sinha2018certifying,liu2020stochastic,zhu2023provable}. Strong concavity ensures that the inner maximizer is unique, and the resulting primal function $\Phi$ is differentiable, although generally nonconvex. A central goal is to find a point $\hat x$ such that $\norm{\nabla\Phi(\hat x)}\le\eps$, using stochastic gradients of $f$. In the deterministic and finite-sum settings, accelerated proximal-point methods achieve the optimal complexity $\mathcal{O}(\sqrt{\kappa}\epsilon^{-2})$, where $\kappa\triangleq L/\mu$ is the condition number, already matching the corresponding lower bounds~\citep{lin2020near,zhang2021,li2021,han2024}.

In the stochastic setting, where $f(x,y)\triangleq \EE_\xi[F(x,y;\xi)]$, two common oracle assumptions lead to different complexity guarantees. Under an unbiased stochastic gradient oracle with uniformly bounded variance, existing algorithms achieve complexity $\mathcal{O}(\kappa\eps^{-4})$~\citep{sapd2024}. With the additional assumption of averaged smoothness of the stochastic gradients, the best known upper bounds improve to $\mathcal{O}(\eps^{-3})$~\citep{luo2020,sapd2024,spde2026}. However, the corresponding lower bounds remain incomplete. Under bounded variance, \citet{li2021} established an $\Omega(\kappa^{1/3}\eps^{-4})$ lower bound for constrained NC-SC problems, leaving a gap in the dependence on $\kappa$ relative to the $\mathcal{O}(\kappa\eps^{-4})$ upper bound. Under averaged smoothness, to the best of our knowledge, no lower bound has been established specifically for NC-SC minimax optimization.
We therefore ask:
\begin{quote}
\textit{Can we establish tight lower bounds for stochastic NC-SC minimax optimization under bounded variance and averaged smoothness?}
\end{quote}

\paragraph{Contributions}
In this work, we answer both questions affirmatively. Our contributions are three-fold. First, let $\epsilon>0$ denote the target accuracy and assume $\Phi(0)-\inf_x \Phi(x)\le \Delta$. For an unbiased bounded-variance (BV) oracle\footnote{Concurrent work by \citet{zhou2026tight} independently establishes an $\Omega(\kappa\epsilon^{-4})$ lower bound for the BV setting. Our work differs in considering fully unconstrained settings, and additionally establishes lower bounds under averaged smoothness.} with variance bounded by $\sigma^2$, we establish the lower bound, with $\kappa=L/\mu$, 
\begin{equation}\label{eq:intro-bv}
 \Omega\!\left(\frac{\kappa L\sigma^2\Delta}{\eps^4}\right).
\end{equation}
Second, for an averaged-smooth (AS) oracle with Lipschitz constant $\bar L$, we establish, with $\kappa=\bar L/\mu$, 
\begin{equation}\label{eq:intro-as}
 \Omega\!\left(\frac{\Lb\Delta}{\eps^2}
 \min\left\{\frac{\kappa\sigma^2}{\eps^2},
             \frac{\kappa^{3/2}\sigma}{\eps}\right\}\right).
\end{equation}

We refer to Theorems~\ref{thm:bv} and~\ref{thm:as} for the formal statements, and summarize the comparison with existing results in Table~\ref{tab:literature} and Figure~\ref{fig:LB_UB_comparison}. In the BV setting, our lower bound (\ref{eq:intro-bv}) improves the dependence on $\kappa$ over the previous lower bound of \citet{li2021} and matches the upper complexity bounds of \citet{sapd2024} and \citet{spde2026} in their dependence on both $\kappa$ and $\epsilon$. To the best of our knowledge, (\ref{eq:intro-as}) is the first lower bound specifically for the AS setting. Moreover, this lower bound matches the upper complexity bound of \citet{spde2026} in its dependence on $\kappa$ and $\epsilon$ when $\epsilon^{-1}\gtrsim\sqrt{\kappa}$, and that of \citet{sapd2022} when $\epsilon^{-1}\lesssim\sqrt{\kappa}$.

Finally, we note that the corresponding upper bounds are stated under slightly different problem classes and measures of initial optimality. Specifically, \citet{sapd2024} and \citet{spde2026} use a Moreau-envelope-based stationarity measurement. To facilitate a more direct comparison, we also establish lower bounds using a Moreau envelope stationarity, see Corollary~\ref{cor:envelope-main}.

\begin{figure}[ht]
    \centering
    \includegraphics[width=\linewidth]{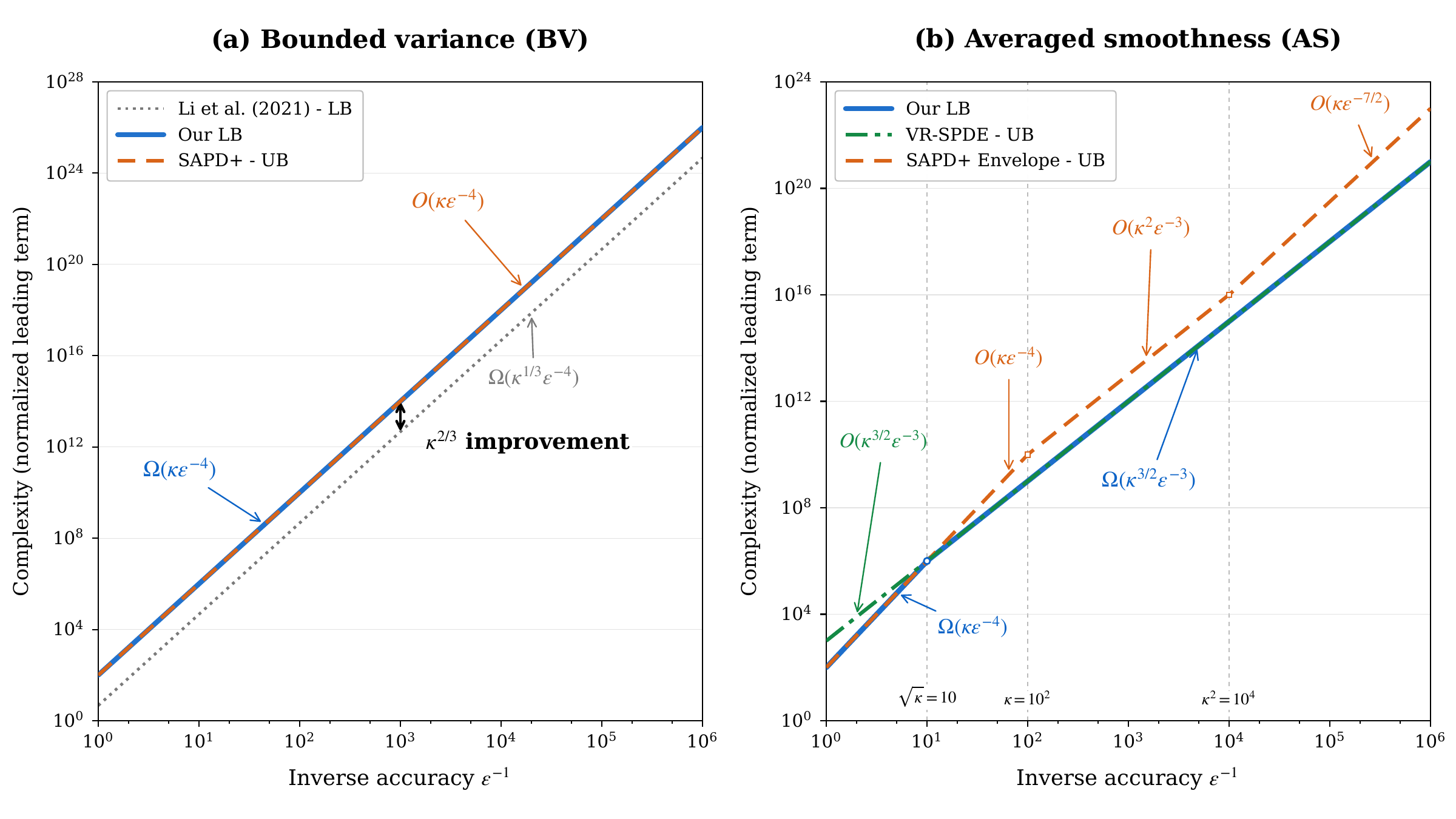}
    \caption{Schematic comparison of previous upper (UB) and lower bounds (LB) with our results under (a) bounded variance and
        (b) averaged smoothness.
        Both panels use log-log axes with $r=1/\varepsilon$,
        $\kappa=100$, and $\sigma=1$, and other constants, parameters and lower-order terms are suppressed.
        The SAPD+ curve~\citep{sapd2024} in (b) is the pointwise minimum of its non-VR and VR rate expressions.
        Overlapping curves indicate matching leading-order
        dependence, not identical oracle costs.}
    \label{fig:LB_UB_comparison}
\end{figure}

\begin{table}[t]
\centering
\caption{Summary and Comparison of Lower and Upper Bound Results}
\label{tab:literature}
\setlength{\tabcolsep}{3pt}
\renewcommand{\arraystretch}{1.75}
\renewcommand{\tabularxcolumn}[1]{m{#1}}
\begin{tabularx}{\linewidth}{
    @{}
    p{1.2cm}
    p{4.2cm}
    p{5.5cm}
    >{\raggedright\arraybackslash}X
    @{}
}
\toprule
\textbf{Setting} & \textbf{Method} & \textbf{Metric} & \textbf{Oracle Complexity} \\
\midrule
\multirow{5}{*}{BV}
& \methodcell{SGDA}{\citep{lin2020gradient}}
& Primal Stationarity 
& $\mathcal{O}(\kappa^3\eps^{-4})$ 
\\[4pt]

& \methodcell{SAPD+}{\citep{sapd2024}}
& Moreau Envelope Stationarity 
& $\mathcal{O}(\kappa\eps^{-4})$ 
\\[4pt]

& \methodcell{SPDE}{\citep{spde2026}}
& Moreau Envelope Stationarity
& $\mathcal{O}(\kappa\eps^{-4})$ 
\\[4pt]
\cline{2-4}

& \methodcell{\textbf{Lower Bound}}{\citep{li2021}}
& Gradient Mapping & $\Omega(\kappa^{1/3}\eps^{-4})$\\

& \methodcell{\textbf{Lower Bound (Ours)}}{Theorem~\ref{thm:bv}}
& Both Primal and Moreau 
& $\Omega(\kappa\eps^{-4})$\\
\midrule
\multirow{4}{*}{AS}
& \methodcell{SREDA}{\citep{luo2020}}
& Primal Stationarity 
& $\mathcal{O}(\kappa^3\eps^{-3})$ 
\\[4pt]

& \methodcell{SAPD+ (VR)}{\citep{sapd2024}}
& Moreau Envelope Stationarity
& $O\!\left(\kappa\eps^{-3}\max\{\kappa,\eps^{-1/2}\}\right)$
\\[4pt]

& \methodcell{VR-SPDE}{\citep{spde2026}}
& Moreau Envelope Stationarity
& $\mathcal{O}(\kappa^{3/2}\eps^{-3})$ 
\\[4pt]
\cline{2-4}

& \methodcell{\textbf{Lower Bound (Ours)}}{Theorem~\ref{thm:as}}
& Both Primal and Moreau 
& $\Omega(\min(\kappa^{3/2}\eps^{-3},\kappa\eps^{-4}))$ \\
\bottomrule
\end{tabularx}

\vspace{3pt}
\begin{minipage}{\linewidth}
\footnotesize

\textbf{Note:} BV denotes the bounded-variance setting, and AS denotes the averaged-smoothness setting. We use $\kappa=L/\mu$ in the BV setting and $\kappa=\Lb/\mu$ in the AS setting. See Theorems~\ref{thm:bv} and~\ref{thm:as} for the precise assumptions and problem settings. AS is a special case of BV, and hence upper bounds established for BV also apply to AS.
\end{minipage}
\end{table}

The hard instance is obtained through a quadratic lifting of hard instances for stochastic nonconvex minimization~\citep{carmon2020,arjevani2023}. Starting from the hard instance $F_T$ by \citet{carmon2020}, we construct a minimax objective of the form
\[
f_T(x,y)=\alpha F_T(\beta y)-\frac{\nu}{2}\|y-\gamma x\|^2,
\]
where the parameters $\alpha, \beta, \nu$ and $\gamma$ are chosen so that $f_T(x,\cdot)$ is strongly-concave. The quadratic coupling is designed so that stationarity of the primal function $\Phi_T(x)=\max_y f_T(x,y)$ implies approximate stationarity of $F_T$ at the explicitly rescaled point $\beta\gamma x$. In particular, an $\eps$-stationary point $\hat x$ of $\Phi_T$ yields an appropriately scaled approximate stationary point $\beta\gamma\hat x$ of $F_T$. Since this point is obtained directly from $\hat x$, the reduction does not require computing the inner maximizer and therefore introduces no additional oracle cost. The construction is compatible with stochastic oracle: we keep the quadratic coupling deterministic and introduce randomness only through the stochastic oracle associated with $F_T$, preventing the oracle variance from growing with $\|x\|$ or $\|y\|$ over the unconstrained domain. The main technical challenge is therefore to establish this stationarity-preserving reduction and tune the scaling parameters so that the lifted instance simultaneously satisfies the required smoothness, strong-concavity and variance conditions.

\section{Related Literature}\label{sec:related}

\paragraph{Lower Bounds for Minimax Optimization.}
First-order oracle lower bounds for convex-concave and strongly-convex-strongly-concave minimax problems were established by \citet{ouyang2021} and \citet{zhang2022saddle}, while \citet{xie2020} studied the finite-sum setting. For nonconvex-strongly-concave (NC-SC) problems, \citet{zhang2021} derived lower bounds for both the deterministic and averaged-smooth finite-sum settings. \citet{han2024} further studied finite-sum NC-SC problems under an oracle model that allows access to both component gradients and proximal operators. Closest to our setting, \citet{li2021} established deterministic and stochastic lower bounds for NC-SC problems; under bounded variance, their stochastic lower bound contains the term $\Omega(L\Delta\kappa^{1/3}\sigma^2\eps^{-4})$. For a broader overview of oracle-complexity results in minimization and minimax optimization, we refer to \citet{zhanghu2025overclaims}.

\paragraph{Nonconvex Minimax Optimization Algorithms.}
In the deterministic NC-SC setting, accelerated proximal-point methods achieve near-optimal dependence on the condition number~\citep{lin2020near,zhang2021}. In the stochastic setting, two-timescale SGDA attains $\mathcal{O}(\kappa^3\eps^{-4})$ stochastic gradient complexity under bounded variance~\citep{lin2020gradient,lin2025}, while SREDA achieves $\mathcal{O}(\kappa^3\eps^{-3})$ under averaged smoothness~\citep{luo2020}. Momentum-based and adaptive variants have also been developed~\citep{huang2022,huang2023}. SAPD+ combines a proximal-point framework with accelerated saddle-point solvers and also admits a variance-reduced variant~\citep{sapd2024}. More recently, \citet{spde2026} proposed the single-loop methods SPDE and VR-SPDE, with complexities $\mathcal{O}(\kappa\eps^{-4})$ under bounded variance and $\mathcal{O}(\kappa^{3/2}\eps^{-3})$ under averaged smoothness, respectively. These rates match the $(\kappa,\eps)$ dependence of the stochastic terms in our lower bounds. Table~\ref{tab:literature} summarizes the rates of these methods. Beyond the NC-SC setting, \citet{li2026smoothing} study smoothing and dual perturbation for nonconvex-concave problems, while \citet{zhangxu2026} develop single-loop methods that attain rates previously achieved by multi-loop schemes. Recent work on NC-SC problems considers generalized smoothness~\citep{gao2026} and regularized Newton methods~\citep{ma2026}.

\paragraph{Recent Lower Bounds beyond Strong Concavity.}
Recent work has extended lower-bound analyses to minimax problems in which strong concavity is relaxed to concavity or the Polyak-{\L}ojasiewicz (PL) condition. For NC-C problems with a compact dual domain, \citet{wu2026ncc} establish deterministic and stochastic lower bounds of $\Omega(\eps^{-3})$ and $\Omega(\eps^{-6})$, respectively, for zero-respecting algorithms under Moreau-envelope stationarity. Deterministic lower bounds for projected zero-respecting methods are also developed by \citet{zhangxu2026}. \citet{pan2026ncc} establish the $\Omega(\eps^{-3})$ deterministic lower bound for arbitrary first-order algorithms and provide a matching upper bound. For NC-PL problems, \citet{panli2026pl} establish an $\Omega(L\Delta\kappa\eps^{-2})$ lower bound for arbitrary deterministic first-order methods, matching the known upper bound in the corresponding parameter regime. In particular, their result demonstrates that relaxing strong concavity to the PL condition can lead to a stronger dependence on the condition number $\kappa$ in the deterministic setting. In contrast, our work focuses on stochastic NC-SC problems and characterizes how the bounded-variance and averaged-smoothness oracle models determine the dependence on $\kappa$.

\section{Problem Setup and Oracle Complexity}\label{sec:setup}
We use Euclidean norms, denote $z=(x,y)$, and $\operatorname{supp}(z)\triangleq\{i:z_i\neq 0\}$ denote the support of $z\in\mathbb{R}^d$. For a mapping $T$ between Euclidean spaces, we denote
\[
\operatorname{Lip}(T)
\triangleq
\inf\left\{
C\ge0:
\|T(u)-T(v)\|\le C\|u-v\|,
\ \forall\,u,v
\right\}
\]
as its global Lipschitz constant.
Here we study the lower bound under the oracle complexity framework~\citep{nemirovskij1983problem}, which comes with the following components: a function class, a stochastic oracle,
an algorithm class, and a convergence criterion. We will discuss them in details in this section.

\paragraph{Function class.}
We consider nonconvex-strongly-concave (NC-SC) objectives. For $M,\mu,\Delta>0$, let $\mathcal F^{d_x,d_y}(M,\mu,\Delta)$ denote the set of continuously differentiable functions $f:\R^{d_x}\times\R^{d_y}\to\R$ such that, with $z=(x,y)$, $\Phi(x)\triangleq\max_y f(x,y)$, $\Phi_*\triangleq\inf_x\Phi(x)>-\infty$,
\begin{align}
 &\norm{\nabla f(z)-\nabla f(z')}
 \le M\norm{z-z'}
 && \forall z,z'\in\R^{d_x+d_y},
 \label{eq:joint-smooth}\\
 &f(x,y')
 \le f(x,y)+\ip{\nabla_y f(x,y)}{y'-y}
 -\frac{\mu}{2}\norm{y'-y}^2
 && \forall x\in\R^{d_x},\; y,y'\in\R^{d_y},
 \label{eq:strong-concavity}\\
 &\Phi(0)-\Phi_*
 \le \Delta.
 \label{eq:primal-gap}
\end{align}
Both the primal and dual variables are unconstrained, and no convexity in $x$ is imposed. Strong concavity ensures that the inner maximizer $y^*(x)$ exists and is unique for every $x$. Moreover, $\Phi$ is differentiable with
$\nabla\Phi(x)=\nabla_x f(x,y^*(x))$, is $M$-weakly convex, and has an $M(1+M/\mu)$-Lipschitz gradient. These properties hold without any compactness assumption; see Appendix~\ref{app:lift}.

\paragraph{Oracle class.}
We consider \emph{stochastic first-order oracle (SFO)}~\citep{arjevani2023}. An instance is a pair $(f,G)$, including the law of the random seed $\xi$.
The oracle is a measurable random vector field and returns a joint gradient estimate
$G(z;\xi)=(G_x(z;\xi),G_y(z;\xi))$ such that, for every $z$,
\begin{equation}\label{eq:oracle-bv}
 \E_\xi G(z;\xi)=\nabla f(z),\qquad
 \E_\xi\norm{G(z;\xi)-\nabla f(z)}^2\le\sigma^2.
\end{equation}
The \emph{bounded-variance} (BV) class $\mathcal I_{\rm BV}(L,\mu,\Delta,\sigma)$
requires $f\in\mathcal F^{d_x,d_y}(L,\mu,\Delta)$ and~\eqref{eq:oracle-bv}.
The \emph{averaged-smooth} (AS) class $\mathcal I_{\rm AS}(\Lb,\mu,\Delta,\sigma)$
requires $f\in\mathcal F^{d_x,d_y}(\Lb,\mu,\Delta)$,~\eqref{eq:oracle-bv}, and
\begin{equation}\label{eq:oracle-as}
 \E_\xi\norm{G(z;\xi)-G(z';\xi)}^2\le\Lb^2\norm{z-z'}^2
 \qquad\text{for all }z,z'.
\end{equation}
Here, the same random seed $\xi$ is used to evaluate $G$ at $z$ and $z'$. By unbiasedness and Jensen's inequality,
\(
 \norm{\nabla f(z)-\nabla f(z')}^2
 \le
 \E_\xi\norm{G(z;\xi)-G(z';\xi)}^2\), 
so~\eqref{eq:oracle-as} implies that $\nabla f$ is $\Lb$-Lipschitz. Accordingly, we define the condition number as $\kappa=L/\mu$ in the BV setting and $\kappa=\Lb/\mu$ in the AS setting.

\subsection{Algorithm Class and Lower Bound}
\paragraph{Oracle access.}
At round $t$, an algorithm selects $1\le K_t\le K$ points
$z^{t,1},\ldots,z^{t,K_t}$ using its past responses and internal randomness.
A fresh independent seed $\xi_t$ is then drawn, and the oracle returns
$G(z^{t,1};\xi_t),\ldots,G(z^{t,K_t};\xi_t)$.
The current points are chosen before any current response is observed;
the seed is hidden and cannot be queried again later.
We take $K=1$ in BV and any fixed finite $K$ in AS; $K=2$ permits paired
same-sample differences. 

\paragraph{Algorithm class.}
We consider \emph{zero-respecting} stochastic first-order algorithms.
Let $g^{t,k}=G(z^{t,k};\xi_t)$ and define the coordinates revealed before round $t$ by
\begin{equation}\label{eq:revealed-support}
 \mathcal S_t=\bigcup_{s<t}\ \bigcup_{j=1}^{K_s}\operatorname{supp}(g^{s,j}).
\end{equation}
An algorithm is zero-respecting if, for every admissible instance, its queries and
its output after $R$ rounds satisfy
\begin{equation}\label{eq:zero-respecting}
 \operatorname{supp}(z^{t,k})\subseteq\mathcal S_t,\qquad
 \operatorname{supp}((\hat x,0))\subseteq\mathcal S_{R+1}.
\end{equation}
Here $(\hat x,0)\in\R^{d_x+d_y}$ denotes the embedding of the primal output into the joint primal-dual space. The first condition requires every query to use only coordinates revealed by previous oracle gradients, while the second requires every nonzero coordinate of the final output $\hat x$ to have been revealed by the end of round $R$. In particular, let $\mathcal S_1=\emptyset$, so first-round queries are at the origin. It covers a lot of algorithms including linear-span algorithms and coordinate-wise updates that preserve zero coordinates.

\paragraph{Complexity measurement.}
Our goal is to find the stationary point $\hat{x}$ in terms of the expectation, i.e.,
$$\E\norm{\nabla\Phi(\hat x)}^2\le\eps^2;$$
we also give lower bounds for $\E\norm{\nabla\Phi(\hat x)}\le\eps$ (refer to Proposition~\ref{prop:canonical-hardness}).
After $R$ rounds, the number of oracle calls is $N=\sum_{t=1}^R K_t$.
Each returned joint gradient vector counts once, including all minibatch and
refresh samples. Let $\mathcal A_{{\rm zr},N}^{(K)}$ be the algorithms above using
at most $N$ calls on every run. For either instance class $\mathcal I$, define
\begin{equation}\label{eq:complexity}
 \mathcal C_{\eps,{\rm zr}}^{(K)}(\mathcal I)
 =\inf\!\left\{N\in\mathbb N_0:
 \exists A\in\mathcal A_{{\rm zr},N}^{(K)},\
 \sup_{I\in\mathcal I}\E_I\norm{\nabla\Phi_I(\hat x_A)}^2\le\eps^2\right\}.
\end{equation}
The classes range over all finite dimensions; algorithms may know the dimensions
and class parameters. Thus a lower bound rules out a uniform guarantee for every
algorithm in this class, not just a particular update rule.
Our proof bounds rounds first and uses $R\le N$ to bound calls, without an extra
factor $K$. The expected-norm version is defined by replacing the squared norm
and $\eps^2$ in~\eqref{eq:complexity} by the norm and $\eps$.

\subsection{Hard Instance Construction}
\label{subsec:hard_instance_cons}
Let $H:\R^d\to\R$ have an $\ell$-Lipschitz gradient, and let $g_H$ be an unbiased
estimator of $\nabla H$. For positive $h,q,\gamma$, set
\begin{equation}\label{eq:lift-parameters}
 \alpha=q^2/h,\qquad \beta=h/q,\qquad \nu=\mu+\ell h,
\end{equation}
and define the minimax objective
\begin{equation}\label{eq:lift}
 f_H(x,y)=\alpha H(\beta y)-\frac{\nu}{2}\norm{y-\gamma x}^2.
\end{equation}
We evaluate the quadratic gradient exactly and randomize only the estimate of
$\nabla H$:
\begin{equation}\label{eq:lift-oracle}
 G_x=\nu\gamma(y-\gamma x),\qquad
 G_y=qg_H(\beta y;\xi)-\nu(y-\gamma x).
\end{equation}
If $x_i=y_i=0$, then $[G_x]_i=0$ and $[G_y]_i=q[g_H(\beta y;\xi)]_i$. Thus, the quadratic coupling cannot
reveal a new coordinate on its own; new coordinates enter the support
only through the base oracle $g_H$. The following proposition is the main reduction used in proofs.

\begin{proposition}[Lifted Function]\label{prop:lift}
Suppose $H$ is twice continuously differentiable and bounded below, with
$H(0)-\inf H\le D$ and $\norm{\nabla H(0)}\le g$.
Then $f_H(x,\cdot)$ is $\mu$-strongly concave and
\begin{align}
 \operatorname{Lip}(\nabla f_H)&\le\nu(1+\gamma^2)+\ell h,\label{eq:lift-smooth}\\
 \Phi_H(0)-\inf\Phi_H&\le\alpha D+\frac{q^2g^2}{2\mu},\label{eq:lift-gap}\\
 \norm{\nabla H(\beta\gamma x)}&\le\frac{1+\ell h/\nu}{\gamma q}\norm{\nabla\Phi_H(x)}
 \qquad\forall x.\label{eq:stationarity-transfer}
\end{align}
If $g_H$ has variance at most $v^2$, the lifted oracle has variance at most $q^2v^2$.
If $g_H$ has averaged-smoothness constant $S$, the lifted constant is at most
$\nu(1+\gamma^2)+hS$.
\end{proposition}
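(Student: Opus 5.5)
The plan is to verify each claim by direct computation, using the identities $\alpha\beta=q$ and $\alpha\beta^2=h$ implied by \eqref{eq:lift-parameters}. These give
\[
\nabla_y f_H(x,y)=q\nabla H(\beta y)-\nu(y-\gamma x),\qquad \nabla_x f_H(x,y)=\nu\gamma(y-\gamma x),
\]
so $\nabla^2_{yy}f_H=h\nabla^2H(\beta y)-\nu I\preceq(\ell h-\nu)I=-\mu I$. This is exactly the $\mu$-strong concavity \eqref{eq:strong-concavity}, by the choice $\nu=\mu+\ell h$. To avoid relying on second derivatives, I can equivalently note that $y\mapsto\alpha H(\beta y)+\frac{\ell h}{2}\norm{y}^2$ is convex because $\nabla H$ is $\ell$-Lipschitz.

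For \eqref{eq:lift-smooth} I split $\nabla f_H$ into two parts. The quadratic part is linear in $z$, with matrix $\nu\begin{pmatrix}\gamma^2 I&-\gamma I\\-\gamma I&I\end{pmatrix}$, whose operator norm is $\nu(1+\gamma^2)$. The other part is $(0,q\nabla H(\beta y))$, which is $q\beta\ell=\ell h$-Lipschitz; the triangle inequality then gives the bound.

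For the gap \eqref{eq:lift-gap}:
\begin{itemize}
\item Upper bound: apply strong concavity of $f_H(0,\cdot)$ at $y=0$, where $\nabla_y f_H(0,0)=q\nabla H(0)$. This gives $\Phi_H(0)\le\alpha H(0)+\max_y\{q\ip{\nabla H(0)}{y}-\frac{\mu}{2}\norm{y}^2\}\le\alpha H(0)+q^2g^2/(2\mu)$.
\item Lower bound: $\Phi_H(x)\ge f_H(x,\gamma x)=\alpha H(\beta\gamma x)\ge\alpha\inf H$.
\end{itemize}
Subtracting yields the claim.

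The stationarity transfer \eqref{eq:stationarity-transfer} is the key step. I use that $y^*(x)$ exists uniquely and that $\nabla\Phi_H(x)=\nabla_x f_H(x,y^*(x))$ (the Danskin-type fact recorded in the setup, Appendix~\ref{app:lift}). The first-order condition at $y^*$ reads $q\nabla H(\beta y^*)=\nu(y^*-\gamma x)$. Hence
\[
\nabla\Phi_H(x)=\nu\gamma(y^*-\gamma x)=\gamma q\nabla H(\beta y^*).
\]
Then I compare $\nabla H$ at $\beta\gamma x$ and at $\beta y^*$:
\[
\norm{\nabla H(\beta\gamma x)-\nabla H(\beta y^*)}\le\ell\beta\norm{y^*-\gamma x}=\frac{\ell\beta q}{\nu}\norm{\nabla H(\beta y^*)}=\frac{\ell h}{\nu}\norm{\nabla H(\beta y^*)}.
\]
So $\norm{\nabla H(\beta\gamma x)}\le(1+\ell h/\nu)\norm{\nabla H(\beta y^*)}=\frac{1+\ell h/\nu}{\gamma q}\norm{\nabla\Phi_H(x)}$. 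The only subtlety is justifying the Danskin formula on an unbiased, unconstrained domain, which the paper defers to the appendix.

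For the oracle claims, \eqref{eq:lift-oracle} gives $G-\nabla f_H=(0,q(g_H(\beta y;\xi)-\nabla H(\beta y)))$. This is unbiased with second moment at most $q^2v^2$. For averaged smoothness, $G(z;\xi)-G(z';\xi)$ is the same deterministic linear map as above applied to $z-z'$, plus $q(g_H(\beta y;\xi)-g_H(\beta y';\xi))$. Minkowski's inequality in $L^2(\xi)$ bounds the root mean square by $\nu(1+\gamma^2)\norm{z-z'}+qS\beta\norm{y-y'}$, which is at most $(\nu(1+\gamma^2)+hS)\norm{z-z'}$.
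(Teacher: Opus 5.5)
Your proposal is correct and follows the paper's proof essentially step for step: the identities $\alpha\beta=q$ and $\alpha\beta^2=h$, the Hessian bounds for concavity and smoothness, the gap via the quadratic upper bound at the origin together with the feasible choice $y=\gamma x$, the transfer through the inner first-order condition $\beta y^*-\beta\gamma x=\frac{h}{\nu}\nabla H(\beta y^*)$, and Minkowski's inequality for the oracle. One small slip is in your optional derivative-free aside: strong concavity needs $y\mapsto\alpha H(\beta y)-\frac{\ell h}{2}\|y\|^2$ to be concave, not $\alpha H(\beta y)+\frac{\ell h}{2}\|y\|^2$ to be convex; the correct version also follows from the $\ell$-Lipschitz gradient, so your main argument is unaffected.
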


\begin{remark}
    The above result is important and characterizes several important properties of the lifted function $f_H$. We want to highlight \eqref{eq:stationarity-transfer} above, with this lower bound result, we transfer the lower bound study on the primal function $\nabla\Phi_H$ to that of $\nabla H$, which is well-studied in nonconvex minimization lower bound literature~\citep{carmon2020,arjevani2023}.
    Moreover, note that
    \begin{equation}\label{eq:noise-identity}
     G(x,y;\xi)-\nabla f_H(x,y)
     =\bigl(0,q[g_H(\beta y;\xi)-\nabla H(\beta y)]\bigr).
    \end{equation}
    Thus the unbounded quadratic gradient contributes no noise, which allows a
    uniform variance bound on the whole space.
\end{remark}

\paragraph{Choice of $H$ and $g_H$.}
Regarding the hard instance function $H$ and oracle $g_H$ for lower bound derivations, we take $H:\mathbb{R}^T\rightarrow \mathbb{R}$ to be
the hard instance function introduced by \citet{carmon2020} (i.e., the function $F_T$ therein), which satisfies the zero-chain property,
and used in the stochastic nonconvex lower bounds of \citet{arjevani2023}.
For an integer $T\ge2$, it is defined by
\begin{equation}\label{eq:base-chain-main}
\begin{aligned}
H(u)
={}-\Psi(1)\Phi_0(u_1)
+\sum_{i=2}^{T}
\bigl[
\Psi(-u_{i-1})\Phi_0(-u_i)
-\Psi(u_{i-1})\Phi_0(u_i)
\bigr],
\end{aligned}
\end{equation}
where
\[
\Psi(t)=
\begin{cases}
0, & t\le \frac12,\\
\exp \bigl(1-(2t-1)^{-2}\bigr), & t>\frac12,
\end{cases}
\qquad
\Phi_0(t)=\sqrt e\int_{-\infty}^{t}e^{-s^2/2}\,ds.
\]
Each summand couples two consecutive coordinates.
Starting from the origin, a zero-respecting method using
exact gradients can reveal at most one new coordinate
per query.
Moreover, $u_T=0$ implies $\|\nabla H(u)\|>1$, so the
method must reach the last coordinate before finding
a point with a small gradient.
The gradient Lipschitz constant is independent of $T$,
and the initial gap is $\mathcal{O}(T)$.

For the estimator $g_H$, we use the stochastic gradient constructions of
\citet{arjevani2023}. Additional properties of the resulting stochastic hard
instances are collected in Appendix~\ref{app:base}. In both the BV and AS
settings, the lifted constructions preserve the probabilistic zero-chain
structure of the underlying hard instance; we describe this property next. 

\subsection{Probabilistic Zero-Chain Mechanism}\label{subsec:zero-chain}
For $u\in\R^T$ and $a\ge0$, define
\[
 \operatorname{prog}_a(u)
 :=\max\bigl(\{i\in\{1,\ldots,T\}:|u_i|>a\}\cup\{0\}\bigr).
\]
In particular, $\operatorname{prog}_0(u)$ is the largest index in
$\operatorname{supp}(u)$, with value zero when $u=0$.

\begin{definition}[First-Order Zero-Chain]\label{def:zero-chain}
A differentiable function $H:\R^T\to\R$ is a first-order zero-chain if
\[
 \operatorname{prog}_0(\nabla H(u))
 \le \operatorname{prog}_0(u)+1
 \qquad\text{for every }u\in\R^T.
\]
\end{definition}
For a zero-respecting method starting from the origin, an exact-gradient
query can therefore reveal at most one new coordinate.
The stochastic counterpart controls how often a new coordinate can be revealed.
We use the following definition from \citet{arjevani2023}.

\begin{definition}[Probability-$p$ Zero-Chain]\label{def:pzc}
Let $p\in(0,1]$, and let $g_H(u;\xi)$ be an unbiased gradient estimator of $H$.
We call $g_H$ a probability-$p$ zero-chain (PZC) if
\begin{equation}\label{eq:pzc-definition}
\begin{aligned}
 \Prob_\xi\!\left(\exists u\in\R^T:
   \operatorname{prog}_0(g_H(u;\xi))
   =\operatorname{prog}_{1/4}(u)+1\right)&\le p,\\
 \Prob_\xi\!\left(\exists u\in\R^T:
   \operatorname{prog}_0(g_H(u;\xi))
   >\operatorname{prog}_{1/4}(u)+1\right)&=0.
\end{aligned}
\end{equation}
\end{definition}
A single seed is used for all points inside each event.
Thus the definition controls an entire same-seed batch, not just each
query separately.

\begin{lemma}[Progress of the Lifted Oracle]\label{lem:progress}
Suppose $g_H$ is a probability-$p$ zero-chain in the sense of
Definition~\ref{def:pzc}, and let $G$ be the lifted oracle
in~\eqref{eq:lift-oracle}, with $d_x=d_y=T$.
For any zero-respecting algorithm following the oracle protocol of
Section~\ref{sec:setup} and returning $\hat x$ after at most $R$ rounds,
\[
 \Prob\!\left(\operatorname{supp}(\hat x)
      \subseteq\{1,\ldots,T-1\}\right)\ge 1-\frac{pR}{T}.
\]
In particular, $R\le T/(4p)$ implies $\Prob(\hat x_T=0)\ge3/4$.
\end{lemma}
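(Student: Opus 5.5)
The plan is to track a single scalar ``progress'' counter for the revealed support, show that it grows by at most one per round, and show that it grows only on a seed event of probability at most $p$. Markov's inequality then finishes the argument. Identify coordinates of $z=(x,y)\in\R^{2T}$ with pairs (block, index $i\in\{1,\ldots,T\}$), and define
\[
P_t:=\max\bigl(\{i : \text{the $x$- or $y$-coordinate with index $i$ lies in }\mathcal S_t\}\cup\{0\}\bigr),
\]
so that $P_1=0$. By zero-respecting~\eqref{eq:zero-respecting}, every query $z^{t,k}=(x^{t,k},y^{t,k})$ in round $t$ has $\operatorname{prog}_0(x^{t,k}),\operatorname{prog}_0(y^{t,k})\le P_t$. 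The output satisfies $\operatorname{supp}(\hat x)\subseteq\{i\le P_{R+1}\}$. It therefore suffices to show $\E[P_{R+1}]\le pR$, since then $\Prob(\hat x_T\neq0)\le\Prob(P_{R+1}\ge T)\le pR/T$ by Markov.

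\emph{Step 1 (deterministic one-step bound).} Fix round $t$ and a query $(x,y)$ with supports in indices $\le P_t$. By~\eqref{eq:lift-oracle}, $G_x=\nu\gamma(y-\gamma x)$ and the quadratic part $-\nu(y-\gamma x)$ of $G_y$ are supported on indices $\le P_t$. The only remaining term is $q\,g_H(\beta y;\xi_t)$. With $u=\beta y$ we have $\operatorname{prog}_{1/4}(u)\le\operatorname{prog}_0(u)=\operatorname{prog}_0(y)\le P_t$. The second line of~\eqref{eq:pzc-definition} gives, almost surely and simultaneously for all $u$, $\operatorname{prog}_0(g_H(u;\xi_t))\le\operatorname{prog}_{1/4}(u)+1\le P_t+1$. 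Hence $P_{t+1}\le P_t+1$ almost surely.

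\emph{Step 2 (progress requires the rare event).} Suppose $P_{t+1}=P_t+1$. Then some query $k$ has $\operatorname{prog}_0(g_H(\beta y^{t,k};\xi_t))=P_t+1$. Since $\operatorname{prog}_{1/4}(\beta y^{t,k})\le P_t$, the a.s.\ bound from Step 1 forces $\operatorname{prog}_{1/4}(\beta y^{t,k})=P_t$. This gives equality $\operatorname{prog}_0(g_H(u;\xi_t))=\operatorname{prog}_{1/4}(u)+1$ at $u=\beta y^{t,k}$. So the event
\[
E_t:=\{\exists u:\operatorname{prog}_0(g_H(u;\xi_t))=\operatorname{prog}_{1/4}(u)+1\}
\]
occurs. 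The event $E_t$ depends only on $\xi_t$ and is uniform over all $u$, so it covers the whole same-seed batch at once. Therefore $P_{t+1}-P_t\le\1_{E_t}$.

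\emph{Step 3 (conditioning and summation).} The points of round $t$ are chosen from past responses and internal randomness, which are independent of the fresh seed $\xi_t$. Moreover, $E_t$ does not depend on the query points at all. Hence $\Prob(E_t\mid\mathcal F_{t-1})=\Prob_\xi(E_t)\le p$, where $\mathcal F_{t-1}$ is the history up to round $t-1$. Summing over $t$ gives $\E[P_{R+1}]\le\sum_{t=1}^R\Prob(E_t)\le pR$, and Markov yields the claim. For $R\le T/(4p)$ this gives $\Prob(\hat x_T=0)\ge 3/4$.

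\emph{Main obstacle.} The delicate step is Step 2, specifically the interaction of the two thresholds $\operatorname{prog}_{1/4}$ and $\operatorname{prog}_0$ with batched same-seed queries. One must see that progress beyond $P_t$ can happen only through the equality case of the PZC. This requires the a.s.\ ``$\le+1$'' property to hold simultaneously for all $u$, not merely per query. I would also verify the measurability of $E_t$; I will take it as implicit in Definition~\ref{def:pzc}.
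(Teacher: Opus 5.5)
Your proposal is correct and is essentially the paper's proof. The paper counts seed events via $J_t=\sum_{s\le t}\mathbf{1}\{\xi_s\in\mathcal E\}$ and shows by induction that query supports lie in $[J_{t-1}]$ and response supports in $[J_t]$, using that off $\mathcal E$ and a null set one has $\operatorname{prog}_0(g_H(u;\xi))\le\operatorname{prog}_{1/4}(u)$ for all $u$; this is equivalent to your bound $P_{t+1}-P_t\le\mathbf{1}_{E_t}$, and it concludes the same way with $\E J_R\le pR$ and Markov's inequality. One detail the paper makes explicit that you leave implicit is an algorithm stopping after $\tau<R$ rounds, which it handles by padding with unused independent seeds; your argument covers this too, since $P_{\tau+1}\le\sum_{t\le R}\mathbf{1}_{E_t}$.
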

This adapts the PZC progress argument of
\citet[Lemma~1]{arjevani2023} to the quadratic lift.
Appendix~\ref{app:progress} detailed the discussion and characterizes the PZC property of the two base oracles,
proves Lemma~\ref{lem:progress}, and combines it with
\eqref{eq:stationarity-transfer} to obtain the gradient lower bound in
Proposition~\ref{prop:canonical-hardness}.

\section{The Bounded Variance Case Lower Bound}\label{sec:bv}
In this section, we derive the lower complexity bound of stochastic NC-SC minimax optimization problems, under the generic bounded variance setting.

\begin{theorem}[Bounded Variance]\label{thm:bv}
There are universal constants $c,c_0>0$ such that, for
$L,\mu,\Delta>0$, $\sigma\ge0$, $\kappa=L/\mu\ge8$, and
$0<\eps^2\le c_0L\Delta$,
\begin{equation}\label{eq:bv-lower}
 \mathcal C_{\eps,{\rm zr}}^{(1)}
 \bigl(\mathcal I_{\rm BV}(L,\mu,\Delta,\sigma)\bigr)
 \ge c\frac{L\Delta}{\eps^2}
       \max\left\{1,\frac{\kappa\sigma^2}{\eps^2}\right\}.
\end{equation}
The same order holds for the expected-norm target.
The hard instances have actual smoothness-to-strong-concavity ratio within
universal constant factors of $\kappa$.
\end{theorem}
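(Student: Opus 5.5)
The proof instantiates Proposition~\ref{prop:lift} with $H=F_T$ from \eqref{eq:base-chain-main}, rescaled to small gradient. The stochastic oracle $g_H$ is the bounded-variance probability-$p$ zero-chain estimator of \citet{arjevani2023}: it multiplies the $(\operatorname{prog}_{1/4}(u)+1)$-th gradient coordinate by $\zeta/p$ with $\zeta\sim\mathrm{Bernoulli}(p)$. The facts we use are that $H$ has $\ell=O(1)$ smoothness, $H(0)-\inf H\le D=O(T)$, and $\norm{\nabla H(0)}\le g=O(1)$. The estimator has variance $v^2=O((1-p)/p)$. Finally, $u_T=0$ forces $\norm{\nabla H(u)}>1$.

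\textbf{Parameter choice.} Take $\gamma^2\asymp 1$, say $\gamma=1$, and $h\asymp \mu$. Then $\nu=\mu+\ell h\asymp\mu$, and by \eqref{eq:lift-smooth} the smoothness is $O(\mu)$. This is too small, so instead set $h=L/(c\ell)$, giving $\nu\asymp L$ and $\operatorname{Lip}(\nabla f_H)\le 3\nu\le L$ after adjusting constants. Strong concavity $\mu$ holds by Proposition~\ref{prop:lift}, so the actual condition number is $\asymp\kappa$. In \eqref{eq:stationarity-transfer} the factor is $(1+\ell h/\nu)/(\gamma q)\le 2/(\gamma q)$. Hence $\norm{\nabla\Phi_H(\hat x)}\le\gamma q/2$ forces $\norm{\nabla H(\beta\gamma\hat x)}\le1$, which is impossible whenever $\hat x_T=0$.

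To get the $\kappa$ gain, choose $\gamma$ large relative to $1$ but keep $\nu\gamma^2\lesssim L$. Concretely, set $\nu\asymp\mu$ (so $h\asymp\mu/\ell$) and $\gamma^2\asymp\kappa$. Then $\nu(1+\gamma^2)\asymp L$, and the transfer threshold becomes $\gamma q\asymp\sqrt{\kappa}\,q$. Set $q=4\eps/\gamma$, so that $\eps$-stationarity of $\Phi_H$, which requires $\norm{\nabla\Phi_H}\le\eps<\gamma q/2$, cannot occur with $\hat x_T=0$.

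\textbf{Budget constraints.} The gap \eqref{eq:lift-gap} is $\alpha D+q^2g^2/(2\mu)$, with $\alpha=q^2/h\asymp \eps^2\ell/(\kappa\mu)=\eps^2\ell/L$. It is therefore $O(\eps^2T/L)+O(\eps^2/L)$, and requiring this to be $\le\Delta$ gives $T\asymp L\Delta/\eps^2$. The condition $\eps^2\le c_0L\Delta$ ensures $T\ge2$. The lifted variance is $q^2v^2\asymp(\eps^2/\kappa)(1-p)/p\le\sigma^2$. We take $p=\min\{1,c\,\eps^2/(\kappa\sigma^2)\}$.

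\textbf{Counting argument.} By Lemma~\ref{lem:progress}, any zero-respecting algorithm with $R\le T/(4p)$ outputs $\hat x_T=0$ with probability at least $3/4$. On that event $\norm{\nabla\Phi_H(\hat x)}\ge\gamma q/2>\eps$, actually at least $2\eps$. This gives $\E\norm{\nabla\Phi_H(\hat x)}^2\ge\frac34\cdot4\eps^2>\eps^2$, and the expected-norm version is handled the same way. Hence $N\ge R>T/(4p)\asymp (L\Delta/\eps^2)\max\{1,\kappa\sigma^2/\eps^2\}$.

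\textbf{Main obstacle.} The delicate point is verifying \eqref{eq:stationarity-transfer} with constant $O(1)$ when $\nu\asymp\ell h$. This is where the argument exploits the deterministic coupling. Increasing $\gamma$ shrinks the admissible $q$, which reduces noise by $1/\kappa$, while the smoothness stays $\asymp L$ because the $x$-block Hessian scales as $\nu\gamma^2$. I would double-check that the Lipschitz bound in \eqref{eq:lift-smooth} with $\nu\asymp\mu$ and $\ell h\asymp\mu$ is indeed $\asymp L$, and that $\kappa\ge8$ leaves room for the constants.
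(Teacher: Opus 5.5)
Your final parameter choice ($\gamma^2\asymp\kappa$, $h\asymp\mu/\ell$ so that $\nu\asymp\mu$, $\gamma q\asymp\eps$, $T\asymp L\Delta/\eps^2$ from the gap budget, $p\asymp\min\{1,\eps^2/(\kappa\sigma^2)\}$ from the variance budget, then Lemma~\ref{lem:progress} combined with the stationarity transfer) is essentially the paper's proof, which takes $\gamma^2=\kappa/4$, $h=\mu/(4\ell)$, $q=8\eps/\gamma$, $T=\lfloor \Delta h/(48q^2)\rfloor$; only the numerical constants differ, and the transfer factor $1+\ell h/\nu<2$ is automatic rather than a delicate point. The one loose end is the actual-condition-number clause: $\mu$-strong concavity only gives $\mu_{\rm act}\ge\mu$ (in your discarded branch with $\nu\asymp L$, $\mu_{\rm act}$ can in fact be of order $L$), so for the final choice you should add $\mu_{\rm act}\le\nu+\ell h\asymp\mu$ and $L_{\rm act}\ge\nu\gamma\sqrt{1+\gamma^2}\ge\nu\gamma^2\asymp L$, as in Lemma~\ref{lem:actual-condition}.
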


\paragraph{Proof Sketch.}
We use the hard instance function and oracle as discussed in Section~\ref{subsec:hard_instance_cons}, which are motivated by \citet{carmon2020,arjevani2023}. Its gradient stays bounded away from zero
until its last coordinate is reached. The oracle reveals the next coordinate
with probability at most $p$, with variance $\mathcal{O}(p^{-1})$ independent of $T$.
In~\eqref{eq:lift-parameters}, we choose
\begin{equation}\label{eq:bv-scaling}
 \gamma^2\asymp\kappa,\qquad q^2\asymp\eps^2/\kappa,\qquad h\asymp\mu,
 \qquad p\asymp\min\{1,\eps^2/(\kappa\sigma^2)\},
\end{equation}
where $p=1$ when $\sigma=0$.
Proposition~\ref{prop:lift} guarantees that the stationarity transfer fixes $\gamma q$ at a constant multiple of $\eps$. A zero-respecting algorithm needs order $T/p$ rounds to reveal the chain.
The dependence on $\kappa$ comes from the smaller reveal probability.
Appendices~\ref{app:base} to \ref{app:bv-calibration} present the detailed oracle definition, support
argument, and all parameter checks. 

\paragraph{Comparison with Existing Results.}
The previous lower bound for the BV setting is
$\Omega(\kappa^{1/3}\eps^{-4})$~\citep{li2021}. Our
Theorem~\ref{thm:bv} strengthens the dependence on the condition number from
$\kappa^{1/3}$ to $\kappa$, while allowing both the primal and dual variables
to remain unconstrained. On the upper-bound side, SAPD+ and SPDE achieve a
leading stochastic complexity of $\mathcal{O}(\kappa\eps^{-4})$ for
Moreau-envelope stationarity~\citep{sapd2024,spde2026}. In
Section~\ref{subsec:moreau}, we extend our lower bound to the same stationarity
criterion, while Section~\ref{subsec:upper-comparison} discusses the remaining
differences in assumptions.

\section{The Averaged-Smooth Lower Bound}\label{sec:as}
Building on the BV analysis, we now impose the additional averaged-smoothness assumption on the stochastic oracle and derive a sharper lower bound.

\begin{theorem}[Averaged Smoothness]\label{thm:as}
There are universal constants $c,c_0,c_1>0$ such that the following holds for
every fixed finite $K\ge1$. 
Let $\Lb,\mu,\Delta>0$, $\sigma\ge0$, $\kappa=\Lb/\mu\ge8$, and suppose
\begin{equation}\label{eq:as-accuracy}
 0<\eps^2\le c_0\Lb\Delta,\qquad
 \eps\sigma\le c_1\Lb\Delta\sqrt\kappa.
\end{equation}
Then
\begin{equation}\label{eq:as-lower}
 \mathcal C_{\eps,{\rm zr}}^{(K)}
 \bigl(\mathcal I_{\rm AS}(\Lb,\mu,\Delta,\sigma)\bigr)
 \ge c\frac{\Lb\Delta}{\eps^2}
 \max\left\{1,\min\left\{\frac{\kappa\sigma^2}{\eps^2},
                         \frac{\kappa^{3/2}\sigma}{\eps}\right\}\right\}.
\end{equation}
The same order holds for the expected-norm target.
\end{theorem}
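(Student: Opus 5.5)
We follow the same route as Theorem~\ref{thm:bv}. We lift the Arjevani et al.\ hard instance through Proposition~\ref{prop:lift}, now using their mean-squared-smooth (variance-reduced) oracle as $g_H$ in place of the bounded-variance oracle. That base oracle is a probability-$p$ zero-chain on $H$ with gradient Lipschitz constant $\ell=O(1)$. Its variance is $v^2=O((1-p)/p)$ and its averaged-smoothness constant is $S=O(1/\sqrt p)$, both independent of $T$; these properties are taken from Appendix~\ref{app:base}. Lemma~\ref{lem:progress} then gives $\Prob(\hat x_T=0)\ge 3/4$ whenever $R\le T/(4p)$. When $\hat x_T=0$, the base property $\norm{\nabla H(u)}>1$ for $u_T=0$ combines with \eqref{eq:stationarity-transfer} to give $\norm{\nabla\Phi_H(\hat x)}\ge \gamma q/(1+\ell h/\nu)$. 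We set $\gamma q\asymp\eps$ with a large enough constant, so that $\E\norm{\nabla\Phi_H(\hat x)}^2>\eps^2$ and also $\E\norm{\nabla\Phi_H(\hat x)}>\eps$. This is Proposition~\ref{prop:canonical-hardness}, and it yields the bound $N\ge R\gtrsim T/p$.

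Next we calibrate the parameters. As in \eqref{eq:bv-scaling}, we take $h\asymp\mu$ so that $\nu=\mu+\ell h\asymp\mu$, and $\gamma^2\asymp\kappa$, which gives $\nu(1+\gamma^2)\asymp\mu\kappa=\Lb$. We take $q^2\asymp\eps^2/\kappa$. The averaged-smoothness constant of the lift is at most $\nu(1+\gamma^2)+hS\lesssim \Lb+\mu/\sqrt p$. This is at most $\Lb$ provided $p\gtrsim\kappa^{-2}$. The variance is $q^2v^2\asymp \eps^2/(\kappa p)$, and requiring it to be at most $\sigma^2$ forces $p\gtrsim\eps^2/(\kappa\sigma^2)$. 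We therefore choose
\[
p\asymp\min\Bigl\{1,\max\Bigl\{\frac{\eps^2}{\kappa\sigma^2},\frac{\eps}{\sqrt\kappa\,\sigma}\Bigr\}\Bigr\}.
\]
This choice satisfies the variance constraint, and we must still check $p\gtrsim\kappa^{-2}$ for the smoothness constraint. The factor $1/p$ in the round count then reproduces the $\min\{\kappa\sigma^2/\eps^2,\kappa^{3/2}\sigma/\eps\}$ of \eqref{eq:as-lower}.

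The second branch $\eps/(\sqrt\kappa\sigma)$ is the step I expect to be delicate. With the crude parametrization above, the smoothness constraint only asks for $p\gtrsim\kappa^{-2}$, which would give a stronger (and false) bound. So I would recheck where $\sqrt p$ enters. The likely correct accounting is that $S$ multiplies $h=\beta q$ and the gradient scale, so that $hS$ must be compared with $\mu\cdot\sqrt\kappa$-type quantities once $\gamma$ is traded against $q$. I would try rebalancing: let $h$ be free, keep $\gamma q\asymp\eps$, and impose both $hS\lesssim\Lb$ and $\nu(1+\gamma^2)\lesssim\Lb$. I would then maximize $T/p$ subject to the gap constraint $\alpha D+q^2g^2/(2\mu)\le\Delta$ with $D\asymp T$, so $T\asymp\Delta h/q^2$. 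The tradeoff in $h$ between a larger $T$ and a smaller admissible $p$ should produce the $\kappa^{3/2}\sigma/\eps$ regime. I expect this optimization to be the main obstacle, and I would first solve it explicitly as a two-constraint problem in $(h,p)$.

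For the remaining items, the gap term $q^2g^2/\mu\asymp\eps^2/\Lb\lesssim\Delta$ uses $\eps^2\le c_0\Lb\Delta$. The condition $T\ge2$ and the case $p\le1$ are where the second condition in \eqref{eq:as-accuracy}, namely $\eps\sigma\le c_1\Lb\Delta\sqrt\kappa$, is used. The deterministic term $\Lb\Delta/\eps^2$ comes from $p=1$ with the exact oracle, as in Theorem~\ref{thm:bv}. Finally, Lemma~\ref{lem:progress} already covers same-seed batches of arbitrary size $K$, so the count $N\ge R$ needs no loss of a factor $K$.
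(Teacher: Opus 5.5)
The reduction half of your plan is correct and matches the paper: the lift, the PZC progress bound for same-seed batches, the stationarity transfer, and the count $N\ge R$. The gap is in the calibration, which is the only genuinely new content of this theorem, and your explicit choice fails.

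With $h\asymp\mu$ held fixed, the chain length is pinned at $T\asymp\Delta h/q^2\asymp\Lb\Delta/\eps^2$. The averaged-smoothness requirement $hS\lesssim\Lb$ with $S\asymp p^{-1/2}$ then forces $p\gtrsim\kappa^{-2}$. Writing $a=\kappa\sigma^2/\eps^2$, the best this parametrization can give is $\frac{\Lb\Delta}{\eps^2}\max\{1,\min\{a,\kappa^2\}\}$. That is \emph{weaker} than the target, not ``stronger and false,'' as soon as $a>\kappa^2$: it caps at $\kappa^2\Lb\Delta\eps^{-2}$ and loses the $\eps^{-3}$ rate entirely. Your specific choice $p\asymp\min\{1,\max\{\eps^2/(\kappa\sigma^2),\eps/(\sqrt\kappa\sigma)\}\}$ gives $1/p\asymp\max\{1,\min\{a,\sqrt a\}\}$, which is off by a factor $\kappa$ in the second branch. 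For $\sigma/\eps\gg\kappa^{3/2}$ it even violates $p\gtrsim\kappa^{-2}$. So the claim that ``the factor $1/p$ reproduces the $\min$'' is false, and no choice of $p$ alone can produce $\kappa^{3/2}\sigma/\eps$ while $h\asymp\mu$.

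The missing idea is to leave $p$ at the value dictated by the variance constraint alone, $p=\min\{1,q^2g^2/\sigma^2\}\asymp\min\{1,\eps^2/(\kappa\sigma^2)\}$. You then pay for averaged smoothness by shrinking $h$ instead: take $h=\min\{\mu/(4\ell),\Lb\sqrt p/(4s)\}$, so that $hS\le\Lb/4$ while still $\nu\le5\mu/4$ and $\ell h/\nu\le1/5$. Then $T\asymp\Delta h/q^2$ and $T/p\asymp\frac{\Lb\Delta}{\eps^2}\min\{p^{-1},\kappa p^{-1/2}\}$. With $p\asymp\min\{1,1/a\}$ one checks $\min\{p^{-1},\kappa p^{-1/2}\}=\max\{1,\min\{a,\kappa\sqrt a\}\}$.

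Since $h/p\lesssim\min\{\mu/p,\Lb/\sqrt p\}$ is decreasing in $p$, this is exactly the optimum of the two-constraint problem from your third paragraph. The trade-off is not against ``a smaller admissible $p$'': $p$ stays as small as the variance permits, and only $T$ is sacrificed.

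This also explains the second condition in \eqref{eq:as-accuracy}. In the branch $h\asymp\Lb\sqrt p$ one has $\Delta h/q^2\asymp\Lb\Delta\sqrt\kappa/(\eps\sigma)$, and $\eps\sigma\le c_1\Lb\Delta\sqrt\kappa$ is what keeps $T\ge2$ there (the paper arranges $B_*\ge16$). In your $h\asymp\mu$ parametrization that condition would never be used, which is itself a sign that the parametrization was wrong.
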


\subsection{Proof Sketch}
The stochastic oracle used in the BV analysis does not, in general, satisfy the averaged-smoothness requirement. We therefore adopt the smooth-gating construction of \citet{arjevani2023}, which preserves variance of order $\mathcal{O}(p^{-1})$ while achieving an averaged-smoothness constant of order $\mathcal{O}(p^{-1/2})$. As a consequence, Proposition~\ref{prop:lift} imposes the additional constraint
\(
h\lesssim \min\{\mu,\Lb\sqrt{p}\}\).
With the same choices $\gamma^2\asymp\kappa$ and $q^2\asymp\eps^2/\kappa$ as in the BV case, the resulting complexity scales as
\begin{equation}\label{eq:as-mechanism}
 \frac{T}{p}
 \asymp
 \frac{\Delta h}{q^2p}
 \asymp
 \frac{\Lb\Delta}{\eps^2}
 \min\left\{p^{-1},\kappa p^{-1/2}\right\}.
\end{equation}
Choosing
$p\asymp\min\{1,\eps^2/(\kappa\sigma^2)\}$
then yields~\eqref{eq:as-lower}. The conditions in~\eqref{eq:as-accuracy} ensure that the chain length is admissible and that the initial primal gap remains within $\Delta$. Full calibration details are given in Appendix~\ref{app:as-calibration}.

Returning to~\eqref{eq:as-lower}, the lower bound can be expressed across three regimes:
\begin{equation}\label{eq:as-regimes}
 \Lb\Delta\eps^{-2}\times
 \begin{cases}
  1, &\sigma\eps^{-1}\le\kappa^{-1/2},\\
  \kappa \sigma^2\eps^{-2}, &\kappa^{-1/2}<\sigma\eps^{-1}<\sqrt\kappa,\\
  \kappa^{3/2}\sigma\eps^{-1}, &\sigma\eps^{-1}\ge\sqrt\kappa.
 \end{cases}
\end{equation}
In particular, in the high-accuracy regime, where $\eps$ is sufficiently small, the lower bound reduces to
$\Omega(\Lb\Delta\kappa^{3/2}\sigma\eps^{-3})$.
Moreover, when $L=\Lb$, the AS lower bound is always no larger than the corresponding BV lower bound.

\subsection{Lower Bounds for Moreau Envelope Stationarity}
\label{subsec:moreau}

Theorems~\ref{thm:bv} and~\ref{thm:as} measure stationarity using the
primal gradient. But recent NC-SC literature of SAPD+ and SPDE
\citep{sapd2024,spde2026} used Moreau envelope stationarity as the measurement.
Our primal stationarity lower bound results in Theorem~\ref{thm:as} does not provide a lower bound for this criterion, so a separate transfer is needed. 
Set $M=L$ in the BV setting and $M=\Lb$ in the AS setting, and define
\begin{equation}\label{eq:envelope}
 P_f(x):=\Phi_{1/(2M)}(x)
 =\min_{u\in\R^{d_x}}\{\Phi(u)+M\norm{u-x}^2\}.
\end{equation}
Lemma~\ref{lem:primal-envelope-properties} ensures that $P_f$ is
well-defined and continuously differentiable.

On the hard instances used in our proofs, the two stationarity
measures are equivalent up to universal constants.
Specifically, Lemma~\ref{lem:envelope-comparison} shows that
\begin{equation}\label{eq:envelope-key}
 \frac{123}{128}\norm{\nabla P_{f_H}(x)}
 \le \norm{\nabla\Phi_H(x)}
 \le \frac{133}{128}\norm{\nabla P_{f_H}(x)}
 \qquad\text{for every }x.
\end{equation}
Thus here a small Moreau-envelope gradient would also
imply a small primal gradient, with a constant factor independent of
$\kappa$. Combining this comparison with the primal-gradient lower
bounds in Proposition~\ref{prop:canonical-hardness}, we have the following result.

\begin{corollary}[Moreau Envelope Lower Bounds]\label{cor:envelope-main}
The lower bounds in Theorems~\ref{thm:bv} and~\ref{thm:as} also hold,
up to universal constant factors, when the required output satisfies either
\[
 \E\norm{\nabla P_f(\hat x)}^2\le\eps^2
 \qquad\text{or}\qquad
 \E\norm{\nabla P_f(\hat x)}\le\eps.
\]
Here the parameters, oracle protocols, and algorithm class are unchanged.
\end{corollary}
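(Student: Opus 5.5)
The plan is to reuse the hard instances and the round-counting argument behind Theorems~\ref{thm:bv} and~\ref{thm:as}, and to show that the only change is the stationarity measure, which on these instances differs from the primal one by a universal factor. Concretely, I fix the same lifted instances $(f_H,G)$ from~\eqref{eq:lift}--\eqref{eq:lift-oracle}, with the same calibrated parameters in the BV and AS cases. Every class-membership check then carries over unchanged: smoothness, strong concavity, primal gap, variance and averaged smoothness. So does the progress bound of Lemma~\ref{lem:progress}. With $R\le T/(4p)$ rounds we therefore still have $\Prob(\hat x_T=0)\ge 3/4$. On that event the zero-chain property of $H$ gives $\norm{\nabla H(\beta\gamma\hat x)}>1$. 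By~\eqref{eq:stationarity-transfer} this yields $\norm{\nabla\Phi_H(\hat x)}\ge \gamma q/(1+\ell h/\nu)$, which the calibration makes at least a fixed constant times $\eps$. I will call this constant $C_0\eps$, as in Proposition~\ref{prop:canonical-hardness}.

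The second step is to pass to the envelope using the comparison~\eqref{eq:envelope-key} from Lemma~\ref{lem:envelope-comparison}. It gives $\norm{\nabla P_{f_H}(\hat x)}\ge \tfrac{128}{133}\norm{\nabla\Phi_H(\hat x)}\ge \tfrac{128}{133}C_0\eps$ on the same event, whose probability is at least $3/4$. Taking expectations, $\E\norm{\nabla P_{f_H}(\hat x)}\ge \tfrac34\cdot\tfrac{128}{133}C_0\eps$. Since $\E\norm{\cdot}^2\ge(\E\norm{\cdot})^2$, the same bound holds for the squared target. Hence both envelope targets fail whenever $R\le T/(4p)$.

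To finish, I rescale $\eps$ by the universal constant $\tfrac{133}{96C_0}$. Equivalently, I rerun the calibration of Appendices~\ref{app:bv-calibration} and~\ref{app:as-calibration} with $\eps$ replaced by a constant multiple of itself. This changes $T/p$, and hence the bounds~\eqref{eq:bv-lower} and~\eqref{eq:as-lower}, only by universal constants. It also changes the admissibility conditions $\eps^2\le c_0L\Delta$ and~\eqref{eq:as-accuracy} only through $c_0$ and $c_1$. Since $R\le N$, the call-count lower bounds follow as before.

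The main obstacle is checking that Lemma~\ref{lem:envelope-comparison} applies at exactly the calibrated parameters, with $M=L$ or $M=\Lb$ defining $P_f$. The argument behind it would use the fact that $\Phi_H$ is $M$-weakly convex, so the envelope problem in~\eqref{eq:envelope} is $M$-strongly convex with a unique minimizer $u^*(x)$. It would also use the identity $\nabla P(x)=2M(x-u^*(x))=\nabla\Phi_H(u^*(x))$. The point to verify is that the Lipschitz constant of $\nabla\Phi_H$ on the hard instances is at most a small constant times $M$, not of order $M\kappa$. Here I expect $\nabla\Phi_H$ to be governed by $\nu\gamma^2\cdot(\ell h/\nu)$-type quantities after the $y^*$ elimination, comparable to a small fraction of $M$. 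Given that, $\norm{u^*-x}=\norm{\nabla P}/(2M)$ yields $|\norm{\nabla\Phi_H(x)}-\norm{\nabla P(x)}|\le \tfrac{\mathrm{Lip}(\nabla\Phi_H)}{2M}\norm{\nabla P(x)}$, which produces constants like $123/128$ and $133/128$. Since the lemma is stated earlier, I would invoke it directly and only confirm that the BV and AS calibrations satisfy its hypotheses.
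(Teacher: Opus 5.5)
Your proposal is correct and follows essentially the same route as the paper: reuse the calibrated hard instances, apply Proposition~\ref{prop:canonical-hardness}, and transfer the bound pointwise to the envelope via Lemma~\ref{lem:envelope-comparison}, whose proof is exactly the bound $\operatorname{Lip}(\nabla\Phi_H)\le \nu\gamma^2\ell h/\mu\le 5M/64$ you anticipate (Proposition~\ref{prop:primal-curvature}). The only difference is cosmetic: since $C_0=20/3$, your constant $\tfrac34\cdot\tfrac{128}{133}C_0=\tfrac{640}{133}$ already exceeds $1$, so the rescaling of $\eps$ is unnecessary and $c_0,c_1$ stay unchanged, matching the paper's proof, which applies the moment bounds of Proposition~\ref{prop:canonical-hardness} directly.
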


Appendix~\ref{app:moreau} proves the gradient comparison and the
corollary. Both gradients are evaluated at the algorithm's output,
so no additional oracle call or proximal computation is needed.
The resulting bounds preserve the dependence on $\kappa$ and $\eps$
and can be compared directly with upper bounds stated for
Moreau-envelope stationarity.

\subsection{Comparison with Existing Upper Bounds}
\label{subsec:upper-comparison}

For primal stationarity, SREDA established $\mathcal{O}(\kappa^3\eps^{-3})$ SFO complexity under averaged smoothness~\citep{luo2020}, while it further assumes a compact dual domain and sample-wise concavity.
Its expected-norm criterion is also covered by our lower bounds (refer to Proposition~\ref{prop:canonical-hardness}). For Moreau envelope stationarity, SAPD+~\citep[the revised version]{sapd2024} gives
\begin{equation}\label{eq:sapd-corrected}
 \mathcal{O}\!\left(\frac{L\mathcal G_0}{\eps^2}
       \max\{\kappa,\sqrt{\sigma/\eps}\}
       (1+\kappa\sigma/\eps)\right)
\end{equation}
for its variance-reduced variant \citep{sapd2024}, where $\mathcal G_0$ is the initial primal-dual gap.
Consequently, its bound is not uniformly $\mathcal{O}(\kappa^2\eps^{-3})$ as accuracy $\eps\rightarrow 0$.

More recently, \citet{spde2026} proposed VR-SPDE and obtained
$\mathcal{O}(\Lb \kappa^{3/2}\sigma\eps^{-3})$ for the AS case.
By Corollary~\ref{cor:envelope-main}, we can find that the result of \citet{spde2026} tightly matches our lower bound in the high-accuracy regime. Also as shown in Table~\ref{tab:literature} and Figure~\ref{fig:LB_UB_comparison}, in the low-accuracy regime, the $\mathcal{O}(\kappa\eps^{-4})$ upper bound by SAPD+~\citep{sapd2024} also tightly matches our lower bound, so our lower bound reveals the optimal SFO complexity in terms of $\kappa$ and $\eps$ for the AS case.

\section{Conclusion and Future Work}\label{sec:conclusion}

We establish stochastic first-order lower bounds for fully unconstrained NC-SC minimax optimization over zero-respecting algorithms. For sufficiently small $\eps$, we prove lower bounds of $\Omega(L\Delta\kappa\sigma^2\eps^{-4})$ under bounded variance and $\Omega(\Lb\Delta\kappa^{3/2}\sigma\eps^{-3})$ under averaged smoothness. Both results follow from a unified construction that couples a nonconvex zero-chain with a deterministic quadratic lifting, and both extend to Moreau envelope stationarity without worsening the dependence on $\kappa$. These lower bounds match recent stochastic upper bounds in terms of their dependence on $\kappa$ and $\eps$. 

Several questions remain open. First, it would be useful to extend the lower bounds beyond zero-respecting algorithms to more general, possibly randomized, first-order methods. Another direction is to better align the domain and initialization assumptions in upper and lower bounds. Finally, our zero-chain hard instances require a dimension that grows with the target accuracy; whether comparable lower bounds hold in fixed or low dimensions, in the spirit of \citet{chewi2023complexity}, remains open.

\section*{Lean Formalization}

A Lean formalization that provides machine-checked verification of key components of the analysis is available at: \href{https://github.com/Wu-Qilong/Tight-Lower-Bounds-for-Stochastic-Nonconvex-Strongly-Concave-Minimax-Optimization}{https://github.com/Wu-Qilong/Tight-Lower-Bounds-for-Stochastic-Nonconvex-Strongly-Concave-Minimax-Optimization}

\section*{AI Use Statement}

In this work, we used generative AI tools, primarily ChatGPT, to assist with exploring candidate hard instance constructions and proof strategies, refining mathematical arguments, surveying relevant literature, and drafting and polishing part of the manuscript. The authors formulated the research questions and problem settings, evaluated and selected among candidate approaches, and independently verified all mathematical claims and proofs included in the final paper. The authors have reviewed the work and take full responsibility for the final content of this paper.

\bibliographystyle{plainnat}
\bibliography{arxiv_ref}
\clearpage

\appendix
\section{Preliminaries and proof of the quadratic lift}\label{app:lift}
We collect the standard value-function properties, prove
Proposition~\ref{prop:lift}, and verify the actual condition numbers of the
constructed instances.

\subsection{Standard value-function properties}
The following lemma supplies the facts used in Section~\ref{sec:setup} and
in the Moreau-envelope argument of Appendix~\ref{app:moreau}.
The sensitivity and differentiability statements are standard; see
\citet[Lemma~1]{sinha2018certifying} and
\citet[Lemma~4.3]{lin2020gradient}.
The envelope formula is given in \citet[Lemma~17]{lin2020near}.

\begin{lemma}[Primal function and Moreau envelope]
\label{lem:primal-envelope-properties}
Let $M,\mu>0$, and let
$f:\R^{d_x}\times\R^{d_y}\to\R$ be continuously differentiable,
with an $M$-Lipschitz gradient, and suppose $f(x,\cdot)$ is
$\mu$-strongly concave for every $x$.
Then $y^*(x):=\arg\max_y f(x,y)$ exists uniquely, the value function
$\Phi(x):=\max_y f(x,y)$ is continuously differentiable, and
\begin{equation}\label{eq:standard-primal-properties}
 \operatorname{Lip}(y^*)\le\frac M\mu,\qquad
 \nabla\Phi(x)=\nabla_x f(x,y^*(x)),\qquad
 \operatorname{Lip}(\nabla\Phi)\le M\left(1+\frac M\mu\right).
\end{equation}
Moreover, $\Phi$ is $M$-weakly convex.
If $\inf\Phi>-\infty$, then
\[
 u_x:=\arg\min_u\{\Phi(u)+M\norm{u-x}^2\}
\]
exists uniquely, and the Moreau envelope $P_f:=\Phi_{1/(2M)}$ satisfies
\begin{equation}\label{eq:standard-envelope-gradient}
 \nabla P_f(x)=2M(x-u_x)=\nabla\Phi(u_x).
\end{equation}
\end{lemma}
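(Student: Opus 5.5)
The plan is to follow the standard Danskin/implicit-function route, using only $M$-smoothness and $\mu$-strong concavity. We never invoke compactness, since every object we need is produced by strong concavity or strong convexity.

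\textbf{Step 1: existence, uniqueness and Lipschitzness of $y^*$.} For fixed $x$, $f(x,\cdot)$ is continuous and $\mu$-strongly concave. It is therefore coercive, i.e.\ $f(x,y)\to-\infty$ as $\norm{y}\to\infty$, and it has a unique maximizer $y^*(x)$, characterized by $\nabla_y f(x,y^*(x))=0$. For two points $x,x'$, write $y=y^*(x)$ and $y'=y^*(x')$. Strong monotonicity of $-\nabla_y f(x,\cdot)$ gives
\[
\mu\norm{y-y'}^2\le\ip{\nabla_y f(x,y')-\nabla_y f(x,y)}{y-y'}=\ip{\nabla_y f(x,y')-\nabla_y f(x',y')}{y-y'},
\]
where the equality uses $\nabla_y f(x,y)=0=\nabla_y f(x',y')$. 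Applying Cauchy--Schwarz and $M$-smoothness bounds the right-hand side by $M\norm{x-x'}\norm{y-y'}$, so $\operatorname{Lip}(y^*)\le M/\mu$.

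\textbf{Step 2: differentiability of $\Phi$ and the gradient formula.} We do not appeal to a compact-set Danskin theorem. Instead we bound $\Phi(x+d)-\Phi(x)$ from both sides by hand.

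The lower bound comes from using the fixed dual point $y^*(x)$:
\[
\Phi(x+d)\ge f(x+d,y^*(x))\ge \Phi(x)+\ip{\nabla_x f(x,y^*(x))}{d}-\tfrac M2\norm d^2 .
\]
The upper bound comes from $\Phi(x+d)=f(x+d,y^*(x+d))$. We apply the smoothness upper bound around the point $(x,y^*(x+d))$ and use $f(x,y^*(x+d))\le\Phi(x)$. This gives
\[
\Phi(x+d)\le\Phi(x)+\ip{\nabla_x f(x,y^*(x+d))}{d}+\tfrac M2\norm d^2 .
\]
By Step 1 and $M$-smoothness, $\nabla_x f(x,y^*(x+d))=\nabla_x f(x,y^*(x))+O(\norm d)$. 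Hence both bounds agree up to $O(\norm d^2)$, which proves $\nabla\Phi(x)=\nabla_x f(x,y^*(x))$.

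\textbf{Step 3: gradient Lipschitz constant and weak convexity.} Continuity of $\nabla\Phi$ and its Lipschitz constant follow from composing $M$-smoothness with Step 1:
\[
\norm{\nabla\Phi(x)-\nabla\Phi(x')}\le M\bigl(\norm{x-x'}+\norm{y^*(x)-y^*(x')}\bigr)\le M(1+M/\mu)\norm{x-x'}.
\]
For $M$-weak convexity, note that $x\mapsto f(x,y)+\tfrac M2\norm x^2$ is convex for each fixed $y$, because $f(\cdot,y)$ has $M$-Lipschitz gradient. A pointwise supremum of convex functions is convex, so $\Phi+\tfrac M2\norm{\cdot}^2$ is convex.

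\textbf{Step 4: the Moreau envelope.} Assume $\inf\Phi>-\infty$ and set $\psi_x(u)=\Phi(u)+M\norm{u-x}^2$. We have $\psi_x(u)=\bigl[\Phi(u)+\tfrac M2\norm u^2\bigr]+\bigl[M\norm{u-x}^2-\tfrac M2\norm u^2\bigr]$. The first bracket is convex by Step 3, and the second is $\tfrac M2\norm u^2$ plus an affine function of $u$. Hence $\psi_x$ is $M$-strongly convex and continuous, so it is coercive and has a unique minimizer $u_x$, with first-order condition $\nabla\Phi(u_x)=2M(x-u_x)$.

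To get $\nabla P_f(x)=2M(x-u_x)$, we first use weak convexity to write $P_f(x)=\tfrac{M}{1}\norm{x}^2-\sup_u\{2M\ip{x}{u}-\Phi(u)-M\norm u^2\}$. The function $\Phi(u)+M\norm u^2$ is $M$-strongly convex, so its conjugate is differentiable with a $1/M$-Lipschitz gradient. Differentiating the conjugate at $2Mx$ gives gradient $u_x$, and therefore $\nabla P_f(x)=2Mx-2Mu_x$. This also shows $P_f$ is $C^1$.

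Equivalently, one can run a two-sided perturbation argument as in Step 2: compare $\psi_{x+d}(u_x)$ from above with $\psi_{x+d}(u_{x+d})$, and use that $x\mapsto u_x$ is Lipschitz, which follows from strong convexity.

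\textbf{Main obstacle.} The delicate point is the absence of compactness in Steps 2 and 4. The classical Danskin and envelope theorems are usually stated for compact sets, so we avoid them. In their place we use the explicit two-sided smoothness bounds together with Lipschitz continuity of the argmax and argmin maps, both obtained from strong concavity and convexity. Care is also needed that the gradient formula uses the argmax at the base point, which Step 1 makes legitimate.
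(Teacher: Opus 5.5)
Your proof is correct, but it takes a more self-contained route than the paper. The paper's proof is mostly a chain of citations. It gets existence of $y^*$ from coercivity. It then invokes Lemma~1 of Sinha et al.\ (a Danskin-type result) for $\operatorname{Lip}(y^*)$, the gradient formula and $\operatorname{Lip}(\nabla\Phi)$. It invokes Lemma~17 of Lin et al.\ for $\nabla P_f(x)=2M(x-u_x)$. Only the weak-convexity argument and the existence of $u_x$ (a coercive, $M$-strongly convex proximal objective) coincide with your Steps 3 and 4.

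You instead prove the Danskin step by hand. You sandwich $\Phi(x+d)-\Phi(x)$ between two quadratic bounds whose linear terms differ by at most $(M^2/\mu)\norm{d}^2$, using the Lipschitz continuity of $y^*$. You then derive the envelope gradient from $P_f(x)=M\norm{x}^2-g^*(2Mx)$, where $g=\Phi+M\norm{\cdot}^2$ is $M$-strongly convex. Hence $g^*$ is $C^{1,1}$ with $\nabla g^*(2Mx)=u_x$.

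What your approach buys is an explicit justification of the paper's claim that no compactness is needed. Lipschitz continuity of the argmax takes the place of the compactness hypothesis in classical Danskin/envelope theorems. The paper, by contrast, simply asserts that the cited lemmas apply on the unbounded domain. The paper's version is shorter but leans entirely on those references.

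As a byproduct, your Step 4 shows that the hypothesis $\inf\Phi>-\infty$ is not needed for $u_x$ to exist, since $\psi_x$ is $M$-strongly convex regardless. One cosmetic point: the identity $P_f(x)=M\norm{x}^2-g^*(2Mx)$ is pure algebra. Weak convexity is used only to make $g$ strongly convex, so that $g^*$ is differentiable.
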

\begin{proof}
Strong concavity gives
\[
 f(x,y)\le f(x,0)+\ip{\nabla_y f(x,0)}{y}-\frac\mu2\norm y^2
 \longrightarrow-\infty\quad(\norm y\to\infty),
\]
so the inner maximum is attained uniquely on the full space.
The joint smoothness assumption bounds each blockwise Lipschitz constant by
$M$. Applying \citet[Lemma~1]{sinha2018certifying} with these constants gives
\eqref{eq:standard-primal-properties}.
For each fixed $y$, the function $f(\cdot,y)+M\norm{\cdot}^2/2$ is convex;
its pointwise supremum is $\Phi+M\norm{\cdot}^2/2$, proving weak convexity.
When $\inf\Phi>-\infty$, the proximal objective is coercive and
$M$-strongly convex, so $u_x$ exists uniquely.
\citet[Lemma~17]{lin2020near} gives the first equality in
\eqref{eq:standard-envelope-gradient}; the second is the first-order
optimality condition at $u_x$.
\end{proof}

\subsection{Proof of Proposition~\ref{prop:lift}}
The smoothness, gap, and oracle estimates are used in
Appendices~\ref{app:bv-calibration} and~\ref{app:as-calibration};
the stationarity transfer is used in
Proposition~\ref{prop:canonical-hardness}.

\begin{proof}[Proof of Proposition~\ref{prop:lift}]
\textit{Strong concavity, smoothness, and gap.}
Since $\alpha\beta=q$ and $\alpha\beta^2=h$,
\begin{equation}\label{eq:lift-derivatives}
 \nabla_x f_H=\nu\gamma(y-\gamma x),\qquad
 \nabla_y f_H=q\nabla H(\beta y)-\nu(y-\gamma x).
\end{equation}
The bound $\norm{\nabla^2H}\le\ell$ gives
$\nabla_{yy}^2f_H=h\nabla^2H(\beta y)-\nu I\preceq-\mu I$.
The Hessian of the quadratic coupling is
\[
 -\nu\begin{pmatrix}\gamma^2I&-\gamma I\\-\gamma I&I\end{pmatrix},
\]
whose operator norm is $\nu(1+\gamma^2)$.
The remaining Hessian has norm at most $\ell h$, proving
\eqref{eq:lift-smooth}.
Lemma~\ref{lem:primal-envelope-properties} now gives a unique inner
maximizer and the primal-gradient formula.

At the origin, smoothness of $H$ yields
\[
 f_H(0,y)\le\alpha H(0)+q\ip{\nabla H(0)}{y}-\frac\mu2\norm y^2,
 \qquad
 \Phi_H(0)\le\alpha H(0)+\frac{q^2g^2}{2\mu}.
\]
For any $x$, the feasible choice $y=\gamma x$ gives
\begin{equation}\label{eq:finite-lower-bound}
 \Phi_H(x)\ge\alpha H(\beta\gamma x)\ge\alpha\inf H>-\infty.
\end{equation}
Subtracting this lower bound from the bound at the origin proves
\eqref{eq:lift-gap}.

\medskip\noindent
\textit{Explicit stationarity transfer.}
Set $z^*=\beta y^*(x)$ and $w=\beta\gamma x$.
The inner first-order condition and the primal-gradient formula give
\begin{equation}\label{eq:transfer-identities}
 z^*-w=\frac h\nu\nabla H(z^*),\qquad
 \nabla\Phi_H(x)=\gamma q\nabla H(z^*).
\end{equation}
Consequently,
\[
 \norm{\nabla H(w)}
 \le\norm{\nabla H(z^*)}+\ell\norm{w-z^*}
 \le\left(1+\frac{\ell h}{\nu}\right)\norm{\nabla H(z^*)},
\]
which proves~\eqref{eq:stationarity-transfer}.
For the scaled function $H_{\rm sc}(u):=\alpha H(\beta u)$, this also gives
\[
 \norm{\nabla H_{\rm sc}(\gamma x)}
 \le\frac{1+\ell h/\nu}{\gamma}\norm{\nabla\Phi_H(x)}.
\]

\medskip\noindent
\textit{Stochastic oracle.}
The error identity~\eqref{eq:noise-identity} and unbiasedness of $g_H$ give
unbiasedness of $G$ and the variance bound $q^2v^2$.
If $g_H$ has root mean-squared Lipschitz constant at most $S$, then
Minkowski's inequality gives
\begin{align*}
 \bigl(\E\norm{G(z;\xi)-G(z';\xi)}^2\bigr)^{1/2}
 &\le\nu(1+\gamma^2)\norm{z-z'}+qS\norm{\beta(y-y')}\\
 &\le[\nu(1+\gamma^2)+hS]\norm{z-z'},
\end{align*}
which concludes the proof.
\end{proof}

\subsection{Actual condition numbers}
The next lemma is used in the class-membership checks in
Appendices~\ref{app:bv-calibration} and~\ref{app:as-calibration}.
It establishes the actual-condition-number statements in
Theorems~\ref{thm:bv} and~\ref{thm:as}.

\begin{lemma}[Condition numbers of the lift]\label{lem:actual-condition}
Consider the construction of Proposition~\ref{prop:lift}.
Suppose $\gamma^2=M/(4\mu)$, $\ell h\le\mu/4$, and
$\operatorname{Lip}(\nabla f_H)\le M$.
Let $\mu_{\rm act}$ be the largest uniform inner strong-concavity modulus,
and let $L_{\rm act}:=\operatorname{Lip}(\nabla f_H)$.
Then, for $\kappa=M/\mu$,
\begin{equation}\label{eq:actual-ratios}
 \frac\kappa6\le\frac{L_{\rm act}}{\mu_{\rm act}}\le\kappa.
\end{equation}
If the lifted oracle also has root mean-squared Lipschitz constant at most
$M$, its smallest such constant $\Lb_{\rm act}$ satisfies the same bounds:
$\kappa/6\le\Lb_{\rm act}/\mu_{\rm act}\le\kappa$.
\end{lemma}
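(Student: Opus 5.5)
We bound $\mu_{\rm act}$ and $L_{\rm act}$ separately, using the Hessian formulas from the proof of Proposition~\ref{prop:lift}.

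\emph{Bounds on $\mu_{\rm act}$.} The inner Hessian is $\nabla^2_{yy}f_H=h\nabla^2H(\beta y)-\nu I$. Since $\norm{\nabla^2H}\le\ell$ and $\nu=\mu+\ell h$, its eigenvalues lie in $[-\nu-\ell h,\,-\mu]$. This gives $\mu_{\rm act}\ge\mu$. For the reverse direction, $\mu_{\rm act}$ is at most the infimum over $y$ of the smallest curvature $\nu-h\lambda_{\max}(\nabla^2H(\beta y))$, which is at most $\nu+\ell h$. Using $\ell h\le\mu/4$,
\[
 \mu\le\mu_{\rm act}\le\nu+\ell h\le\mu+2\ell h\le\tfrac32\mu .
\]
This crude bound avoids any need to know the actual curvature of $H$.

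\emph{Bounds on $L_{\rm act}$.} The upper bound $L_{\rm act}\le M$ is assumed. For a lower bound, I would test the gradient along the pure-$x$ direction. Take $z=(x,0)$ and $z'=(x',0)$. Then $\nabla_x f_H$ changes by $\nu\gamma^2(x-x')$, so
\[
 L_{\rm act}\ge\nu\gamma^2\ge\mu\gamma^2=\frac M4 .
\]
If the factor $6$ needs more room, one can instead use the top eigenvector of the coupling block $\begin{pmatrix}\gamma^2I&-\gamma I\\-\gamma I&I\end{pmatrix}$. This has direction $(\gamma e,-e)$ with eigenvalue $1+\gamma^2$. The $H$-part only perturbs the Hessian by at most $\ell h\le\mu/4$, so the Hessian norm is at least $\nu(1+\gamma^2)-\ell h\ge\mu\gamma^2$.

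\emph{Combining for $L_{\rm act}/\mu_{\rm act}$.} We get
\[
 \frac{L_{\rm act}}{\mu_{\rm act}}\ge\frac{M/4}{3\mu/2}=\frac{\kappa}{6},
\qquad
 \frac{L_{\rm act}}{\mu_{\rm act}}\le\frac{M}{\mu}=\kappa .
\]
The lower constant $6$ matches exactly, which confirms that this is the intended route.

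\emph{The averaged-smoothness constant.} The upper bound $\Lb_{\rm act}\le M$ is assumed. For the lower bound, Jensen's inequality (noted in Section~\ref{sec:setup}) gives $\Lb_{\rm act}\ge L_{\rm act}\ge M/4$. The same two-sided ratio bounds then follow.

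\emph{Main obstacle.} The step needing care is the upper bound on $\mu_{\rm act}$. Here one has to interpret "largest uniform modulus" correctly, so that the inequality $\mu_{\rm act}\le\nu+\ell h$ is justified. I would argue it by taking any $y$ and noting that the strong-concavity inequality along any direction forces $\mu_{\rm act}$ to be at most the curvature there, which is at most $\nu+\ell h$.
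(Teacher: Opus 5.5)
Your proposal is correct and follows essentially the same route as the paper: the inner-Hessian sandwich $\mu\le\mu_{\rm act}\le\nu+\ell h\le 3\mu/2$, together with the lower bound $L_{\rm act}\ge\nu\gamma^2\ge M/4$ obtained by moving $x$ with $y$ held fixed (the paper computes the full factor $\nu\gamma\sqrt{1+\gamma^2}$ and then drops to $\nu\gamma^2$). The only small difference is for $\Lb_{\rm act}$: you use Jensen's inequality to get $\Lb_{\rm act}\ge L_{\rm act}$, while the paper notes that the pure-$x$ gradient difference is the same for every seed, and both arguments are valid.
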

\begin{proof}
The inner Hessian bounds imply
\begin{equation}\label{eq:actual-mu}
 \mu\le\mu_{\rm act}\le\nu+\ell h\le\frac{3\mu}{2}.
\end{equation}
Holding $y$ fixed and changing $x$ gives, for every seed,
\begin{equation}\label{eq:actual-lower}
\begin{aligned}
 \norm{\nabla f_H(x,y)-\nabla f_H(x',y)}
 &=\norm{G(x,y;\xi)-G(x',y;\xi)}\\
 &=\nu\gamma\sqrt{1+\gamma^2}\norm{x-x'}.
\end{aligned}
\end{equation}
Since $\nu\gamma\sqrt{1+\gamma^2}\ge\nu\gamma^2\ge M/4$,
we obtain $L_{\rm act}\ge M/4$ and, when finite, $\Lb_{\rm act}\ge M/4$.
Combining these estimates with~\eqref{eq:actual-mu} and the assumed upper
bounds proves the claims.
\end{proof}

\section{The base chain and its stochastic oracles}\label{app:base}
\subsection{The nonconvex chain}
Recall that we define $H=F_T$ in
\eqref{eq:base-chain-main} where $F_T$ is adapted from \citet{carmon2020}:
for $T\ge2$, define
\[
 \Psi(t)=\begin{cases}
  0,&t\le1/2,\\
  \exp\bigl(1-(2t-1)^{-2}\bigr),&t>1/2,
 \end{cases}
 \qquad
 \Phi_0(t)=\sqrt e\int_{-\infty}^t e^{-s^2/2}\,ds,
\]
and
\begin{equation}\label{eq:base-chain}
 F_T(u)=-\Psi(1)\Phi_0(u_1)
 +\sum_{i=2}^T\bigl[\Psi(-u_{i-1})\Phi_0(-u_i)
                   -\Psi(u_{i-1})\Phi_0(u_i)\bigr].
\end{equation}
For $a\ge0$, write
$\operatorname{prog}_a(u)=\max(\{i:|u_i|>a\}\cup\{0\})$.
Both $\Psi$ and $\Phi_0$ are bounded, so $F_T$ is bounded below.
We use the following published properties.

\begin{lemma}[Properties of $F_T$~\citep{arjevani2023}]\label{lem:base}
The function $F_T$ is twice continuously differentiable and satisfies
\begin{align}
 F_T(0)-\inf F_T&\le12T,\label{eq:bounded_init}\\
 \operatorname{Lip}(\nabla F_T)&\le152,\quad
 \norm{\nabla F_T(u)}_\infty\le23,\label{eq:base-bounds}\\
 \operatorname{prog}_0(\nabla F_T(u))
 &\le\operatorname{prog}_{1/2}(u)+1,\label{eq:base-support}\\
 \operatorname{prog}_1(u)<T
 &\ \Longrightarrow\ \norm{\nabla F_T(u)}>1.\label{eq:base-obstruction}
\end{align}
In particular, $\norm{\nabla F_T(0)}\le23$.
\end{lemma}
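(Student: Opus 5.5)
We plan to verify the four properties of Lemma~\ref{lem:base} directly from elementary facts about the scalar functions $\Psi$ and $\Phi_0$, following the argument of \citet{carmon2020} that \citet{arjevani2023} reuse. The scalar facts needed are: $\Psi$ and $\Phi_0$ are $C^\infty$; $0\le\Psi<e$ and $0<\Phi_0<\sqrt{2\pi e}$; $\Psi(t)=\Psi'(t)=0$ for $t\le1/2$; $\Psi(1)=1$ and $\Psi(t)\ge1$ for $t\ge1$; $\Phi_0'(t)=\sqrt e\,e^{-t^2/2}\ge1$ for $|t|\le1$; and the first and second derivatives of $\Psi$ and $\Phi_0$ are bounded by explicit absolute constants (e.g.\ $\Psi'\le\sqrt{54/e}$ and $\Phi_0'\le\sqrt e$). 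Twice continuous differentiability of $F_T$ is then immediate, because $F_T$ is a finite sum of products of these smooth functions.

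\textbf{Initial gap.} Since $\Psi(0)=0$ and $\Psi(1)=1$, all summands except the first vanish at the origin, so $F_T(0)=-\Phi_0(0)$. For the lower bound on $\inf F_T$, we bound each bracket below by $-\Psi(u_{i-1})\Phi_0(u_i)\ge-e\sqrt{2\pi e}$ and the first term by $-\sqrt{2\pi e}$. This gives $F_T(0)-\inf F_T\le\sqrt{2\pi e}+(T-1)e\sqrt{2\pi e}$. Because $e\sqrt{2\pi e}\approx11.2$, the right-hand side is at most $12T$ for $T\ge2$.

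\textbf{Support property \eqref{eq:base-support}.} The partial derivative $\partial_iF_T$ involves only two kinds of terms: $\Psi(\pm u_{i-1})\Phi_0'(\pm u_i)$, and $\Psi'(\pm u_i)\Phi_0(\pm u_{i+1})$. Take $i>\operatorname{prog}_{1/2}(u)+1$. Then both $|u_{i-1}|\le1/2$ and $|u_i|\le1/2$, so all the $\Psi$ and $\Psi'$ factors vanish and $\partial_iF_T(u)=0$.

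\textbf{Obstruction \eqref{eq:base-obstruction}.} Suppose $\operatorname{prog}_1(u)<T$, and let $j\le T$ be the smallest index with $|u_j|\le1$. We show $|\partial_jF_T(u)|>1$.
\begin{itemize}
\item If $j=1$, the first term contributes $-\Phi_0'(u_1)$, whose magnitude is at least $1$.
\item If $j>1$, then $|u_{j-1}|>1$, so exactly one of $\Psi(\pm u_{j-1})$ is at least $1$ and the other is $0$. The coupling term therefore has magnitude at least $\Phi_0'(\pm u_j)\ge1$.
\end{itemize}
The remaining terms $\Psi'(\pm u_j)\Phi_0(\pm u_{j+1})$ must be shown to have the same sign as the coupling term, or to vanish, so that they cannot cancel it. This is exactly the sign analysis in \citet[Lemma~2]{carmon2020}. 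Turning ``$\ge1$'' into the strict inequality ``$>1$'' needs a small extra margin from that sign structure, for instance $\Phi_0'>1$ on $|t|<1$ together with a separate check at the boundary $|u_j|=1$. We expect this case analysis to be the main delicate step.

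\textbf{Smoothness and sup-norm bounds \eqref{eq:base-bounds}.} The Hessian of $F_T$ is tridiagonal, because each summand couples only consecutive coordinates. Each entry is a sum of at most a few products of the form $\Psi^{(a)}\Phi_0^{(b)}$ with $a+b=2$, so it is bounded by an explicit constant. A tridiagonal matrix's operator norm is at most the maximal absolute row sum, which is independent of $T$ and yields $152$. Likewise, each $|\partial_iF_T|$ is bounded by the sum of the sup norms of its few terms, which gives $23$; in particular $\norm{\nabla F_T(0)}\le23$. The numerical constants themselves we would take from \citet[Lemma~3]{carmon2020}, checking only that the lifted argument uses them as stated.
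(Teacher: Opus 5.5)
Your verification is sound overall. It reproduces the argument of \citet{carmon2020} that the paper cites rather than proves; the paper's only added remark concerns the bound at the origin. Two steps need fixing.

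\textbf{The bound at the origin.} The claim $\norm{\nabla F_T(0)}\le23$ does not follow from the coordinatewise bound $\norm{\nabla F_T(u)}_\infty\le 23$. In the Euclidean norm, that bound only gives $23\sqrt T$. You must combine it with \eqref{eq:base-support}, which you have already proved. Since $\operatorname{prog}_{1/2}(0)=0$, the gradient at the origin is supported on the first coordinate; in fact $\nabla F_T(0)=-\sqrt e\,e_1$. This is exactly the remark the paper makes ("the bound at zero follows from \eqref{eq:base-support} and the coordinate bound").

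\textbf{The obstruction step.} This step is not delicate, and you can close it without appealing to a sign analysis in the literature. For $2\le j\le T-1$,
\[
 -\partial_jF_T(u)=\Psi(-u_{j-1})\Phi_0'(-u_j)+\Psi(u_{j-1})\Phi_0'(u_j)+\Psi'(-u_j)\Phi_0(-u_{j+1})+\Psi'(u_j)\Phi_0(u_{j+1}).
\]
For $j=1$ the first two terms are replaced by $\Psi(1)\Phi_0'(u_1)=\Phi_0'(u_1)$. For $j=T$ the last two terms are absent. Every term is a product of nonnegative factors, so nothing can cancel the coupling term.

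Strictness then comes directly:
\begin{itemize}
\item For $j\ge2$, you have $|u_{j-1}|>1$. Since $\Psi$ is strictly increasing on $(1/2,\infty)$, this gives $\Psi(|u_{j-1}|)>\Psi(1)=1$, not merely ``at least $1$''. Combined with $\Phi_0'(u_j)\ge1$, no boundary check is needed.
\item For $j=1$ with $|u_1|<1$, you have $\Phi_0'(u_1)>1$.
\item For $j=1$ with $|u_1|=1$, the extra term is $\Psi'(1)\Phi_0(\pm u_2)=4\Phi_0(\pm u_2)>0$. This term is present because $T\ge2$.
\end{itemize}

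The remaining items check out as you describe. The tridiagonal row-sum bound does reproduce $152$, and the bound $e^{3/2}+\sqrt{54/e}\sqrt{2\pi e}<23$ gives the coordinate bound.
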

Based on the chain of \citet{carmon2020}.
The bound at zero follows from~\eqref{eq:base-support} and the coordinate bound.
We take this lemma as the external analytic input; the lift, oracle estimates,
progress argument, and parameter choices are proved here.

\subsection{A bounded-variance oracle}
Let $Z\sim\operatorname{Bernoulli}(p)$ with $0<p\le1$ and define
\begin{equation}\label{eq:bv-base-oracle}
 [g_T^{\rm BV}(u;Z)]_i
 =\nabla_iF_T(u)
 \left[1+\mathbf1\{i>\operatorname{prog}_{1/4}(u)\}(Z/p-1)\right].
\end{equation}
Since $\E(Z/p-1)=0$, the estimator is unbiased.
Moreover, $\operatorname{prog}_{1/2}(u)\le\operatorname{prog}_{1/4}(u)$.
By~\eqref{eq:base-support}, its error can have at most one nonzero coordinate,
namely $\operatorname{prog}_{1/4}(u)+1$ when this index is at most $T$.
Hence
\begin{equation}\label{eq:bv-base-variance}
 \E\norm{g_T^{\rm BV}(u;Z)-\nabla F_T(u)}^2
 \le23^2\frac{1-p}{p}.
\end{equation}
The new frontier coordinate can be nonzero only when $Z=1$.
This is the bounded-variance construction in
\citet[equation~(17)]{arjevani2023}; the argument above verifies the properties
needed for the lift.

\subsection{An averaged-smooth oracle}
We specify a single smooth gate before choosing any problem parameters.
Let
\[
 \Lambda(t)=\begin{cases}
 \displaystyle\exp\!\left(-\frac{1}{100(t-1/4)(1/2-t)}\right),
                   &1/4<t<1/2,\\[2pt]
 0,&\text{otherwise},
 \end{cases}
 \qquad
 \Gamma(t)=\frac{\int_{1/4}^{t}\Lambda(s)\,ds}
                 {\int_{1/4}^{1/2}\Lambda(s)\,ds}.
\]
Then $0\le\Gamma\le1$, $\Gamma=0$ on $(-\infty,1/4]$,
$\Gamma=1$ on $[1/2,\infty)$, and
$m_\Gamma:=\sup_t|\Gamma'(t)|<\infty$ is a universal constant.
This is the smooth gate used by \citet{arjevani2023}.
For the tail $u_{\ge i}=(u_i,\ldots,u_T)$, set
\begin{equation}\label{eq:as-base-oracle}
 \Theta_i(u)=\Gamma\bigl(1-\norm{\Gamma(|u_{\ge i}|)}\bigr),\qquad
 [g_T^{\rm AS}(u;Z)]_i
 =\nabla_iF_T(u)[1+\Theta_i(u)(Z/p-1)],
\end{equation}
where the inner $\Gamma$ is applied coordinatewise.
The flat regions of $\Gamma$ give
\begin{equation}\label{eq:gate-sandwich}
 \mathbf1\{i>\operatorname{prog}_{1/4}(u)\}
 \le\Theta_i(u)\le
 \mathbf1\{i>\operatorname{prog}_{1/2}(u)\}.
\end{equation}
Indeed, a tail bounded by $1/4$ gives inner norm zero and outer value one;
a tail containing an entry larger than $1/2$ gives inner norm at least one
and outer value zero.

\begin{lemma}[AS oracle moments]\label{lem:as-moments}
For $g=23$ and any $\ell\ge152$, the oracle~\eqref{eq:as-base-oracle} is unbiased,
and for all $u,v$,
\begin{align}
 \E\norm{g_T^{\rm AS}(u;Z)-\nabla F_T(u)}^2
 &\le g^2\frac{1-p}{p},\label{eq:as-base-variance}\\
 \E\norm{g_T^{\rm AS}(u;Z)-g_T^{\rm AS}(v;Z)}^2
 &\le\frac{4g^2m_\Gamma^4+3\ell^2}{p}\norm{u-v}^2.\label{eq:as-base-smooth}
\end{align}
The constants are independent of $T$.
\end{lemma}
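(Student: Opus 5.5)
Unbiasedness follows from $\E[Z/p-1]=0$, since $\Theta_i(u)$ is deterministic. For the variance bound \eqref{eq:as-base-variance}, I will use the sandwich \eqref{eq:gate-sandwich}. The error coordinate $i$ equals $\nabla_iF_T(u)\Theta_i(u)(Z/p-1)$. This vanishes when $i\le\operatorname{prog}_{1/2}(u)$, because then $\Theta_i=0$. It also vanishes when $i>\operatorname{prog}_{1/2}(u)+1$, because then $\nabla_iF_T(u)=0$ by \eqref{eq:base-support}. So the error has at most one nonzero coordinate, $i^*=\operatorname{prog}_{1/2}(u)+1$. Using $|\nabla_iF_T|\le 23=g$, $0\le\Theta\le1$ and $\E(Z/p-1)^2=(1-p)/p$, we get \eqref{eq:as-base-variance}.

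For averaged smoothness I split the difference as
\[
g_T^{\rm AS}(u;Z)-g_T^{\rm AS}(v;Z)=\bigl(\nabla F_T(u)-\nabla F_T(v)\bigr)+(Z/p-1)\,\bigl(w(u)-w(v)\bigr),
\]
where $w_i(u):=\nabla_iF_T(u)\Theta_i(u)$. Taking expectations and using $\E(Z/p-1)=0$ removes the cross term, which leaves
\[
\E\|\cdot\|^2=\|\nabla F_T(u)-\nabla F_T(v)\|^2+\tfrac{1-p}{p}\|w(u)-w(v)\|^2.
\]
The first term is at most $\ell^2\|u-v\|^2\le \ell^2\|u-v\|^2/p$ by \eqref{eq:base-bounds}. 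It therefore suffices to show
\[
\|w(u)-w(v)\|^2\le (4g^2m_\Gamma^4+2\ell^2)\|u-v\|^2.
\]
I will decompose
\[
w_i(u)-w_i(v)=\Theta_i(u)\bigl(\nabla_iF_T(u)-\nabla_iF_T(v)\bigr)+\nabla_iF_T(v)\bigl(\Theta_i(u)-\Theta_i(v)\bigr)
\]
and apply $(a+b)^2\le2a^2+2b^2$. Summing the first piece over $i$ gives at most $2\ell^2\|u-v\|^2$.

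The main obstacle is the second piece, $\sum_i\nabla_iF_T(v)^2(\Theta_i(u)-\Theta_i(v))^2$, which must be bounded without any factor of $T$. Two facts control it. First, $\Theta_i$ is Lipschitz with constant $m_\Gamma^2$: the map $u\mapsto\Gamma(|u_{\ge i}|)$ is coordinatewise $m_\Gamma$-Lipschitz, so by the reverse triangle inequality its Euclidean norm is $m_\Gamma$-Lipschitz in $u$, and the outer $\Gamma$ contributes another factor $m_\Gamma$. Second, only a bounded number of indices contribute. By \eqref{eq:base-support} for $v$, $\nabla_iF_T(v)\neq0$ forces $i\le\operatorname{prog}_{1/2}(v)+1$. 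If also $i\le\operatorname{prog}_{1/2}(u)$, then $\Theta_i(u)=0$ by \eqref{eq:gate-sandwich}. If in addition $i\le\operatorname{prog}_{1/2}(v)$, then $\Theta_i(v)=0$ too. Hence nonzero terms need $i\ge\min\{\operatorname{prog}_{1/2}(u),\operatorname{prog}_{1/2}(v)\}+1$.

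I expect the index set to need more than a count of at most two when $\operatorname{prog}(u)\gg\operatorname{prog}(v)$. To handle this I will swap the roles of $u$ and $v$ in the decomposition, so that the gradient factor is taken at whichever point has the smaller progress. Then $\nabla_iF_T$ at that point forces $i\le\min\operatorname{prog}+1$, and the gate forces $i\ge\min\operatorname{prog}+1$, so at most two indices survive: $i\in\{\min\operatorname{prog},\min\operatorname{prog}+1\}$. Each contributes at most $g^2m_\Gamma^4\|u-v\|^2$, giving $2g^2m_\Gamma^4\|u-v\|^2$. Multiplying by the factor 2 from $(a+b)^2\le2a^2+2b^2$ yields $4g^2m_\Gamma^4$.

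Combining,
\[
\E\|\cdot\|^2\le\bigl[\ell^2+\tfrac1p(4g^2m_\Gamma^4+2\ell^2)\bigr]\|u-v\|^2\le\frac{4g^2m_\Gamma^4+3\ell^2}{p}\|u-v\|^2.
\]
No constant here depends on $T$.
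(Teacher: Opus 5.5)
Your proof is correct and follows the paper's argument. It uses the same cancellation of the cross term via $\E(Z/p-1)=0$, the same $m_\Gamma^2$-Lipschitz bound on $\Theta_i$, and the same intermediate estimate $\norm{w(u)-w(v)}^2\le(4g^2m_\Gamma^4+2\ell^2)\norm{u-v}^2$. The only difference is the index count: the paper avoids your ``main obstacle'' by noting that $w(u)-w(v)$ vanishes outside $\operatorname{supp}w(u)\cup\operatorname{supp}w(v)$, which has at most two elements by your own variance argument, and it applies the split only on that set. Your swap to the lower-progress point is also valid, and in fact it leaves only the single index $\min\{\operatorname{prog}_{1/2}(u),\operatorname{prog}_{1/2}(v)\}+1$, not two.
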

\begin{proof}
Write $a_i(u)=\Theta_i(u)\nabla_iF_T(u)$.
By~\eqref{eq:base-support} and~\eqref{eq:gate-sandwich}, $a(u)$ has at most one
nonzero coordinate, $\operatorname{prog}_{1/2}(u)+1$, with magnitude at most $g$.
The centered error is $(Z/p-1)a(u)$, which proves unbiasedness
and~\eqref{eq:as-base-variance}.
The map $\Theta_i$ is $m_\Gamma^2$-Lipschitz: absolute value and the norm are
$1$-Lipschitz, and the inner and outer gates each contribute $m_\Gamma$.
The union $J=\operatorname{supp}(a(u))\cup\operatorname{supp}(a(v))$ has at most
two indices. On this union,
\[
 |a_i(u)-a_i(v)|^2
 \le2g^2|\Theta_i(u)-\Theta_i(v)|^2
       +2|\nabla_iF_T(u)-\nabla_iF_T(v)|^2.
\]
Summing over $J$ yields
\[
 \norm{a(u)-a(v)}^2
 \le(4g^2m_\Gamma^4+2\ell^2)\norm{u-v}^2.
\]
The zero mean of $Z/p-1$ eliminates the cross term, so
\begin{align*}
 \E\norm{g_T^{\rm AS}(u;Z)-g_T^{\rm AS}(v;Z)}^2
 &=\norm{\nabla F_T(u)-\nabla F_T(v)}^2
       +\frac{1-p}{p}\norm{a(u)-a(v)}^2\\
 &\le\frac{4g^2m_\Gamma^4+3\ell^2}{p}\norm{u-v}^2.
\end{align*}
The deliberately conservative constant avoids relying on a sharper numerical
estimate for the gate.
\end{proof}

\section{Support bounds and the stationarity lower bound}\label{app:progress}
We prove Lemma~\ref{lem:progress} and the stationarity bound used in
Appendices~\ref{app:bv-calibration}-\ref{app:moreau}.
Write $[m]=\{1,\ldots,m\}$ for integers $m\ge0$, with $[0]=\varnothing$.

\begin{lemma}[PZC property of the base oracles
{\citep[Lemmas~3-4]{arjevani2023}}]\label{lem:oracle-pzc}
For every $T\ge2$ and $p\in(0,1]$, the oracles
$g_T^{\rm BV}$ in~\eqref{eq:bv-base-oracle} and
$g_T^{\rm AS}$ in~\eqref{eq:as-base-oracle}
are probability-$p$ zero-chains in the sense of Definition~\ref{def:pzc}.
For either oracle, with $Z\sim\operatorname{Bernoulli}(p)$,
\[
 \operatorname{prog}_0(g_T(u;Z))
 \le \operatorname{prog}_{1/4}(u)+\mathbf1\{Z=1\}
 \qquad\text{for every }u\in\R^T\text{ and }Z\in\{0,1\}.
\]
\end{lemma}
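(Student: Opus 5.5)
The plan is to prove the pointwise inequality
\[
 \operatorname{prog}_0(g_T(u;Z))\le\operatorname{prog}_{1/4}(u)+\mathbf1\{Z=1\}
 \qquad\text{for all } u,\ Z,
\]
for both oracles. Both parts of Definition~\ref{def:pzc} then follow at once. With the seed $Z$ fixed, this holds simultaneously for every $u\in\R^T$, so the event $\{\exists u:\operatorname{prog}_0(g_T(u;Z))=\operatorname{prog}_{1/4}(u)+1\}$ is contained in $\{Z=1\}$, which has probability $p$. The event $\{\exists u:\operatorname{prog}_0(g_T)>\operatorname{prog}_{1/4}(u)+1\}$ is empty. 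This uniformity in $u$ for a fixed seed is exactly what the same-seed batch formulation requires.

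To prove the inequality, fix $u$ and set $k=\operatorname{prog}_{1/4}(u)$. I will show that every coordinate $i>k+1$ of $g_T(u;Z)$ vanishes for any $Z$, and that coordinate $k+1$ (when $k+1\le T$) vanishes when $Z=0$.

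\emph{Coordinates $i>k+1$.} Since $\operatorname{prog}_{1/2}(u)\le\operatorname{prog}_{1/4}(u)=k$, the support bound \eqref{eq:base-support} gives $\nabla_iF_T(u)=0$ for all $i\ge k+2$. Both oracles multiply $\nabla_iF_T(u)$ by a scalar factor, so these coordinates vanish for every $Z$.

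\emph{Coordinate $i=k+1$ with $Z=0$.} For the BV oracle \eqref{eq:bv-base-oracle}, we have $i>\operatorname{prog}_{1/4}(u)$, so the indicator equals $1$. The multiplier is then $1+(0/p-1)=0$. For the AS oracle \eqref{eq:as-base-oracle}, the left inequality of the gate sandwich \eqref{eq:gate-sandwich} gives $\Theta_i(u)\ge\mathbf1\{i>k\}=1$. Since $\Gamma\le1$, this forces $\Theta_i(u)=1$, and the multiplier is again $1-1=0$.

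Hence, when $Z=0$, all coordinates with index greater than $k$ vanish and $\operatorname{prog}_0(g_T)\le k$. When $Z=1$, only coordinates up to $k+1$ can be nonzero. This establishes the displayed inequality.

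The only step needing care is the AS gate sandwich, specifically the inequality $\Theta_i(u)\ge1$ whenever $i>\operatorname{prog}_{1/4}(u)$. I would verify it directly from the definition. Every entry of the tail $u_{\ge i}$ has modulus at most $1/4$, and $\Gamma=0$ on $(-\infty,1/4]$, so the inner vector $\Gamma(|u_{\ge i}|)$ is zero. The outer value is therefore $\Gamma(1)=1$. Everything else is routine bookkeeping with \eqref{eq:base-support}.
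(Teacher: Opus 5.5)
Your proof is correct and is essentially the argument the paper relies on, via \citet[Lemmas~3--4]{arjevani2023} and the remarks following \eqref{eq:bv-base-oracle} and \eqref{eq:gate-sandwich}. In both, \eqref{eq:base-support} together with $\operatorname{prog}_{1/2}\le\operatorname{prog}_{1/4}$ kills every coordinate beyond $\operatorname{prog}_{1/4}(u)+1$, and the indicator (BV) or the left half of the gate sandwich (AS) makes the frontier multiplier vanish when $Z=0$; since this holds for each fixed seed uniformly in $u$, both conditions of Definition~\ref{def:pzc} follow exactly as you say, and you rightly use only the left inequality of the sandwich.
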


\begin{proof}[Proof of Lemma~\ref{lem:progress}]
Let $\mathcal E$ be the first event in Definition~\ref{def:pzc}.
Outside a null set of seeds, that definition gives, simultaneously for all $u$,
\[
 \operatorname{prog}_0(g_H(u;\xi))
 \le \operatorname{prog}_{1/4}(u)+\mathbf1\{\xi\in\mathcal E\}
 \le \operatorname{prog}_0(u)+\mathbf1\{\xi\in\mathcal E\}.
\]
Set $J_0=0$ and
$J_t=\sum_{s=1}^t\mathbf1\{\xi_s\in\mathcal E\}$.
Write $G_b^{t,k}=G_b(z^{t,k};\xi_t)$ for $b\in\{x,y\}$.
Induction gives, almost surely, for every actual query and response,
\begin{equation}\label{eq:progress-increment}
\begin{aligned}
 \operatorname{supp}(x^{t,k})\cup\operatorname{supp}(y^{t,k})
       &\subseteq[J_{t-1}],\\
 \operatorname{supp}(G_x^{t,k})\cup\operatorname{supp}(G_y^{t,k})
       &\subseteq[J_t].
\end{aligned}
\end{equation}
The first queries are at the origin; thereafter, zero-respecting gives the
first inclusion from the earlier response supports.
The coupling terms in~\eqref{eq:lift-oracle} are supported within $[J_{t-1}]$.
Since $\beta>0$, the PZC inequality gives
$\operatorname{supp}(g_H(\beta y^{t,k};\xi_t))\subseteq[J_t]$.
This proves the second inclusion for every $k$ with the same seed $\xi_t$.

If the algorithm stops after $\tau\le R$ rounds, the output restriction gives
$\operatorname{supp}(\hat x)\subseteq[J_\tau]\subseteq[J_R]$;
unused seeds through round $R$ may be sampled independently for this argument.
Since $\Prob(\xi_t\in\mathcal E)\le p$, we have $\E J_R\le pR$.
Markov's inequality now yields
\[
 \Prob\!\left(\operatorname{supp}(\hat x)\not\subseteq[T-1]\right)
 \le\Prob(J_R\ge T)\le\frac{pR}{T}. \qedhere
\]
\end{proof}

\begin{proposition}[Stationarity lower bound]\label{prop:canonical-hardness}
Let $H=F_T$ with $T\ge2$, let $\eps>0$, and use either base oracle from
Lemma~\ref{lem:oracle-pzc} in the lift~\eqref{eq:lift-oracle}.
Suppose $\gamma q=8\eps$ and $\ell h/\nu\le1/5$.
For every admissible zero-respecting algorithm using at most $R\le T/(4p)$ rounds,
\begin{equation}\label{eq:canonical-event}
 \Prob\!\left(\norm{\nabla\Phi_H(\hat x)}>\frac{20}{3}\eps\right)
 \ge\frac34.
\end{equation}
Consequently,
\begin{equation}\label{eq:canonical-moments}
 \E\norm{\nabla\Phi_H(\hat x)}\ge5\eps,
 \qquad
 \E\norm{\nabla\Phi_H(\hat x)}^2\ge\frac{100}{3}\eps^2.
\end{equation}
\end{proposition}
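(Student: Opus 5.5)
The plan is to combine Lemma~\ref{lem:progress} with the stationarity transfer~\eqref{eq:stationarity-transfer} and the obstruction~\eqref{eq:base-obstruction}. First, by Lemma~\ref{lem:oracle-pzc} either base oracle is a probability-$p$ zero-chain. So Lemma~\ref{lem:progress} applies to the lifted oracle with $d_x=d_y=T$: for $R\le T/(4p)$ rounds, the event $\mathcal B=\{\hat x_T=0\}$ has probability at least $3/4$.

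Second, I will show that on $\mathcal B$ the primal gradient is deterministically large. Set $w=\beta\gamma\hat x$. On $\mathcal B$ we have $w_T=0$, so $\operatorname{prog}_1(w)<T$. Then~\eqref{eq:base-obstruction} gives $\norm{\nabla F_T(w)}>1$. Rearranging~\eqref{eq:stationarity-transfer} gives
\[
\norm{\nabla\Phi_H(\hat x)}\ge\frac{\gamma q}{1+\ell h/\nu}\norm{\nabla H(w)}>\frac{8\eps}{1+1/5}=\frac{20}{3}\eps,
\]
using $\gamma q=8\eps$ and $\ell h/\nu\le 1/5$. This proves~\eqref{eq:canonical-event}.

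The step needing most care is measurability and quantifiers. The inclusion $\mathcal B\subseteq\{\norm{\nabla\Phi_H(\hat x)}>20\eps/3\}$ holds pointwise for every realization, including the algorithm's internal randomness. Since the obstruction is a deterministic statement about $F_T$, no further probabilistic argument is needed beyond Lemma~\ref{lem:progress}. I should also check that the rescaling by $\beta\gamma>0$ preserves zero coordinates. It does, because the map is coordinatewise multiplication by a positive scalar. Note that the strict obstruction only requires $\operatorname{prog}_1<T$, which is weaker than $w_T=0$, so this direction is fine.

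Third, the moment bounds follow from the nonnegativity of the norm. We have $\E\norm{\nabla\Phi_H(\hat x)}\ge \frac{20}{3}\eps\cdot\Prob(\mathcal B)\ge\frac{20}{3}\eps\cdot\frac34=5\eps$. Likewise, $\E\norm{\nabla\Phi_H(\hat x)}^2\ge\frac{400}{9}\eps^2\cdot\frac34=\frac{100}{3}\eps^2$. Either bound also follows from Jensen's inequality, but the direct computation already gives the stated constants.
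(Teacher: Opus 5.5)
Your proposal is correct and follows the paper's proof step for step: Lemma~\ref{lem:progress} (via Lemma~\ref{lem:oracle-pzc}) gives $\Prob(\hat x_T=0)\ge3/4$; on that event,~\eqref{eq:base-obstruction} and~\eqref{eq:stationarity-transfer} force $\norm{\nabla\Phi_H(\hat x)}>\frac{20}{3}\eps$; and the moment bounds follow by restricting the expectation to that event. One minor aside is inaccurate, though your direct computation does not depend on it: Jensen applied to the first-moment bound only yields $\E\norm{\nabla\Phi_H(\hat x)}^2\ge25\eps^2$ rather than $\frac{100}{3}\eps^2$, and it cannot recover the first-moment bound from the second.
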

\begin{proof}
Lemmas~\ref{lem:oracle-pzc} and~\ref{lem:progress} give
$\Prob(\hat x_T=0)\ge3/4$.
On this event, $(\beta\gamma\hat x)_T=0$, so
Lemma~\ref{lem:base} gives $\norm{\nabla F_T(\beta\gamma\hat x)}>1$.
The stationarity transfer~\eqref{eq:stationarity-transfer} implies
\[
 \norm{\nabla\Phi_H(\hat x)}
 \ge \frac{\gamma q}{1+\ell h/\nu}
       \norm{\nabla F_T(\beta\gamma\hat x)}
 >\frac{20}{3}\eps.
\]
This proves~\eqref{eq:canonical-event}; taking first and second moments
gives~\eqref{eq:canonical-moments}.
\end{proof}

An algorithm using at most $N$ oracle calls has at most $N$ rounds.
Thus both accuracy criteria fail whenever $N\le T/(4p)$.
Appendices~\ref{app:bv-calibration} and~\ref{app:as-calibration} apply
Proposition~\ref{prop:canonical-hardness} after choosing $T$ and $p$;
Appendix~\ref{app:moreau} transfers the same event to the envelope gradient.

\section{Proof of Theorem~\ref{thm:bv}}\label{app:bv-calibration}
We keep a conservative constant choice so the parameter checks are shared with AS.
Fix universal constants
\begin{equation}\label{eq:universal-constants}
 \Delta_0=12,\quad \ell=155,\quad g=23,\quad
 s=\max\{336,\sqrt{4g^2m_\Gamma^4+3\ell^2}\},\quad C=256g^2.
\end{equation}
The choices $155$ and $336$ provide slack; no random-rotation estimates are used.
For $M=L$ and $\kappa=M/\mu$, choose
\begin{gather}
 \gamma^2=\kappa/4,\quad q=8\eps/\gamma,\quad q^2=256\eps^2/\kappa,
 \quad h=\mu/(4\ell),\quad \nu=5\mu/4,\label{eq:bv-exact}\\
 p=\begin{cases}1,&\sigma=0,\\
       \min\{1,q^2g^2/\sigma^2\},&\sigma>0,
    \end{cases}
 \qquad B_*:=\frac{\Delta h}{4\Delta_0q^2},\qquad T=\lfloor B_*\rfloor.
 \label{eq:common-calibration}
\end{gather}
Use $H=F_T$, $\alpha=q^2/h$, $\beta=h/q$, and $g_H=g_T^{\rm BV}$.

\paragraph{Class membership.}
The strong concavity follows from Proposition~\ref{prop:lift}.
Since $\ell h/\nu=1/5$ and $\kappa\ge8$,
\[
 \nu(1+\gamma^2)+\ell h
 =\frac{3\mu}{2}+\frac{5M}{16}\le\frac M2\le M.
\]
The variance is at most $q^2g^2(1-p)/p\le\sigma^2$.
For $p<1$, this follows from $q^2g^2/p=\sigma^2$;
for $p=1$, the stochastic error vanishes.
The actual condition number satisfies~\eqref{eq:actual-ratios}.

\paragraph{Gap and chain length.}
The two quantities to control are
\begin{equation}\label{eq:bv-gap-length}
 \frac{q^2g^2}{2\mu}=\frac{128g^2\eps^2}{M},\qquad
 B_* =\frac{M\Delta}{4096\Delta_0\ell\eps^2}.
\end{equation}
Choose
\begin{equation}\label{eq:bv-c0}
 c_0\le\min\left\{\frac{1}{256g^2},
                  \frac{1}{65536\Delta_0\ell}\right\}.
\end{equation}
Then $\eps^2\le c_0M\Delta$ makes the first term in~\eqref{eq:bv-gap-length}
at most $\Delta/2$ and gives $B_*\ge16$.
Thus $T\ge B_*/2\ge8$.
Also $\alpha\Delta_0T\le\Delta/4$, so~\eqref{eq:lift-gap} gives
$\Phi_H(0)-\inf\Phi_H\le3\Delta/4$.
The lower bound~\eqref{eq:finite-lower-bound} verifies that the infimum is finite.

\paragraph{Query lower bound.}
The calibrated parameters satisfy Proposition~\ref{prop:canonical-hardness}.
Moreover,
\begin{align*}
 \frac{T}{4p}
 &\ge\frac{M\Delta}{32768\Delta_0\ell\eps^2}
          \max\left\{1,\frac{\kappa\sigma^2}{C\eps^2}\right\}\\
 &\ge\frac{1}{32768\Delta_0\ell C}\,
       \frac{M\Delta}{\eps^2}
          \max\left\{1,\frac{\kappa\sigma^2}{\eps^2}\right\}.
\end{align*}
Every integer budget not exceeding $T/(4p)$ fails with the uniform margins
in~\eqref{eq:canonical-moments}. This proves Theorem~\ref{thm:bv} for both
moment criteria, with $c=(32768\Delta_0\ell C)^{-1}$ or any smaller constant.
The hard dimension is $d_x=d_y=T$.

\section{Proof of Theorem~\ref{thm:as}}\label{app:as-calibration}
Use the constants~\eqref{eq:universal-constants}, let $M=\Lb$ and
$\kappa=M/\mu$, and keep the definitions of $\gamma,q,p,B_*,T,\alpha,\beta$
in~\eqref{eq:bv-exact}-\eqref{eq:common-calibration}, except that now
\begin{equation}\label{eq:as-exact-h}
 h=\min\left\{\frac\mu{4\ell},\frac{M\sqrt p}{4s}\right\},
 \qquad\nu=\mu+\ell h.
\end{equation}
Use $H=F_T$ and $g_H=g_T^{\rm AS}$.
In particular, $B_*$ is computed using the new $h$.

\paragraph{Class membership.}
We have $\ell h\le\mu/4$, $\nu\le5\mu/4$, and $\ell h/\nu\le1/5$.
Strong concavity and the BV calculation are unchanged.
Lemma~\ref{lem:as-moments} and Proposition~\ref{prop:lift} give the joint
averaged-smoothness upper bound
\begin{equation}\label{eq:as-smooth-check}
 \nu(1+\gamma^2)+\frac{hs}{\sqrt p}
 \le\frac{5\mu}{4}\left(1+\frac\kappa4\right)+\frac M4
 \le\frac{23M}{32}<M.
\end{equation}
Population smoothness follows by Jensen's inequality.
\eqref{eq:actual-ratios} verifies both actual condition numbers.

\paragraph{Gap and integer chain length.}
As before, $q^2g^2/(2\mu)=128g^2\eps^2/M$, while
\begin{equation}\label{eq:as-chain-scale}
 B_* =\frac{M\Delta}{4096\Delta_0\eps^2}
       \min\left\{\frac1\ell,\frac{\kappa\sqrt p}{s}\right\}.
\end{equation}
A sufficient choice of universal accuracy constants is
\begin{equation}\label{eq:as-constants}
 c_0\le\min\left\{\frac{1}{256g^2},
             \frac{1}{65536\Delta_0\max\{\ell,s\}}\right\},
 \qquad c_1\le\frac{g}{4096\Delta_0s}.
\end{equation}
The first condition in~\eqref{eq:as-accuracy} bounds the gap correction by
$\Delta/2$ and makes the first term in~\eqref{eq:as-chain-scale}, including
its prefactor, at least $16$.
If $p=1$, it also makes the second term at least $16$, since $\kappa\ge8$.
If $p<1$, then
\[
 \sqrt p=\frac{qg}{\sigma}=\frac{16g\eps}{\sqrt\kappa\sigma},\qquad
 \frac{M\Delta}{4096\Delta_0\eps^2}\frac{\kappa\sqrt p}{s}
 =\frac{gM\Delta\sqrt\kappa}{256\Delta_0s\eps\sigma}\ge16
\]
by the second condition in~\eqref{eq:as-accuracy} and~\eqref{eq:as-constants}.
Thus $B_*\ge16$ in every case, $T\ge B_*/2\ge8$, and the initial primal gap
is at most $3\Delta/4$ exactly as in Appendix~\ref{app:bv-calibration}.

\paragraph{Rate calculation.}
Set $p_*=1$ when $\sigma=0$ and
$p_* =\min\{1,\eps^2/(\kappa\sigma^2)\}$ otherwise.
The calibrated probability obeys
\begin{equation}\label{eq:p-comparison}
 p_*\le p\le Cp_*,\qquad C=256g^2.
\end{equation}
Therefore
\begin{align}
 \frac{T}{4p}
 &\ge\frac{M\Delta}{32768\Delta_0\max\{\ell,s\}\eps^2}
       \min\{p^{-1},\kappa p^{-1/2}\}\notag\\
 &\ge\frac{M\Delta}{32768\Delta_0\max\{\ell,s\}C\eps^2}
       \min\{p_*^{-1},\kappa p_*^{-1/2}\}.\label{eq:as-final-budget}
\end{align}
The second inequality uses $C\ge1$; both terms are bounded below by $C^{-1}$
times their $p_*$ counterparts.
For $a=\kappa\sigma^2/\eps^2$, elementary case splitting gives
\[
 \min\{p_*^{-1},\kappa p_*^{-1/2}\}
 =\max\{1,\min\{a,\kappa\sqrt a\}\},
 \qquad \kappa\sqrt a=\kappa^{3/2}\sigma/\eps.
\]
Substitution in~\eqref{eq:as-final-budget} and
Proposition~\ref{prop:canonical-hardness} prove Theorem~\ref{thm:as}, with
$c=(32768\Delta_0\max\{\ell,s\}C)^{-1}$ or smaller.
The cases $a\le1$, $1<a<\kappa^2$, and $a\ge\kappa^2$ give~\eqref{eq:as-regimes}.
Neither the construction nor the progress bound depends on the batch-size bound
$K$. Thus $d_x=d_y=T$ and all constants are independent of $K$.

\section{Proof of the Moreau-envelope lower bounds}\label{app:moreau}

We prove Corollary~\ref{cor:envelope-main} in three steps: bound the
smoothness of the primal function on the hard family, compare its
gradient with the Moreau-envelope gradient, and apply
Proposition~\ref{prop:canonical-hardness}.
Throughout, $M=L$ for BV and $M=\Lb$ for AS.

\begin{proposition}[Primal smoothness]\label{prop:primal-curvature}
For the lift in Proposition~\ref{prop:lift},
\begin{equation}\label{eq:hard-primal-smoothness}
 \operatorname{Lip}(\nabla\Phi_H)
 \le \frac{\nu\gamma^2\ell h}{\mu}.
\end{equation}
Under either parameter choice in
Appendices~\ref{app:bv-calibration}-\ref{app:as-calibration},
this upper bound is at most $5M/64$.
\end{proposition}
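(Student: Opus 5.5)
We bound the Hessian of $\Phi_H$ directly, using the implicit function theorem for the inner maximizer. Since $H$ is $C^2$ and $\nabla_{yy}^2 f_H = h\nabla^2H(\beta y)-\nu I\preceq-\mu I$ is invertible, $y^*(x)$ is $C^1$ with
\[
 Dy^*(x) = -\bigl(\nabla_{yy}^2 f_H\bigr)^{-1}\nabla_{yx}^2 f_H
 = \nu\gamma\bigl(\nu I - hA\bigr)^{-1},\qquad A:=\nabla^2H(\beta y^*(x)).
\]
Here we used $\nabla_{yx}^2 f_H=\nu\gamma I$. By \eqref{eq:transfer-identities} we have $\nabla\Phi_H(x)=\gamma q\nabla H(\beta y^*(x))$. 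Differentiating this with $\beta=h/q$ gives
\[
 \nabla^2\Phi_H(x)=\gamma h\,A\,Dy^*(x)=\nu\gamma^2 h\,A(\nu I-hA)^{-1}.
\]
The matrices $A$ and $(\nu I-hA)^{-1}$ commute because both are functions of the symmetric matrix $A$. So the operator norm is the largest value of $|\lambda|\nu\gamma^2 h/(\nu-h\lambda)$ over the eigenvalues $\lambda$ of $A$, which satisfy $|\lambda|\le\ell$.

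For each eigenvalue, $|\lambda|\le\ell$ and $\nu-h\lambda\ge\nu-\ell h=\mu$. Hence each eigenvalue of $\nabla^2\Phi_H$ has magnitude at most $\nu\gamma^2\ell h/\mu$. A bounded Hessian gives a Lipschitz gradient by the mean value theorem, which proves \eqref{eq:hard-primal-smoothness}. One step needs care. The alternative formula $\nabla\Phi_H=\nabla_x f_H(x,y^*(x))=\nu\gamma(y^*-\gamma x)$ also contains a $-\nu\gamma^2 I$ term, and we must not bound the two pieces of that formula separately. Working instead from the $\nabla H$ representation in \eqref{eq:transfer-identities} absorbs that cancellation automatically. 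As a sanity check, the two representations agree, since $\nu\gamma(y^*-\gamma x)=\gamma q\nabla H(\beta y^*)$ by the inner optimality condition. This is the only delicate point.

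For the numerical claim, both calibrations satisfy $\ell h\le\mu/4$, $\nu\le 5\mu/4$ and $\gamma^2=\kappa/4=M/(4\mu)$. Substituting these,
\[
 \frac{\nu\gamma^2\ell h}{\mu}\le \frac{(5\mu/4)\,(M/(4\mu))\,(\mu/4)}{\mu}=\frac{5M}{64}.
\]
The BV choice $h=\mu/(4\ell)$ attains $\ell h=\mu/4$ exactly. The AS choice \eqref{eq:as-exact-h} gives $\ell h\le\mu/4$ because $h$ is defined there as a minimum that includes $\mu/(4\ell)$. This completes the plan.
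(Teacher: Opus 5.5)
\textbf{Assessment.} Your proof is correct, but it uses second-order calculus where the paper stays at first order.

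\textbf{The paper's route.} It subtracts the inner optimality conditions $q\nabla H(\beta y^*(x))=\nu(y^*(x)-\gamma x)$ at two points $x,x'$. Using $\operatorname{Lip}(\nabla H)\le\ell$ and $q\beta=h$, this gives $\nu\norm{y-y'}\le\nu\gamma\norm{x-x'}+\ell h\norm{y-y'}$. Since $\nu-\ell h=\mu$, it follows that $\operatorname{Lip}(y^*)\le\nu\gamma/\mu$. The paper then composes with the $\gamma\ell h$-Lipschitz map $y\mapsto\gamma q\nabla H(\beta y)$.

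\textbf{Your route.} You get $Dy^*$ from the implicit function theorem and compute the exact Hessian $\nabla^2\Phi_H=\nu\gamma^2 hA(\nu I-hA)^{-1}$. You then bound it spectrally using $|\lambda|\le\ell$ and $\nu-h\lambda\ge\mu$. The two arguments give the same constant. Both rest on the same key choice: using the representation $\nabla\Phi_H=\gamma q\nabla H(\beta y^*)$ rather than $\nu\gamma(y^*-\gamma x)$.

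\textbf{What the paper's route buys.} It is more elementary. It needs only Lipschitz continuity of $\nabla H$, with no $C^2$ regularity and no implicit function theorem. It also avoids the local-to-global steps you leave implicit:
\begin{itemize}
\item identifying the implicit-function branch with the global $y^*$, which holds because any zero of $\nabla_y f_H(x,\cdot)$ is the unique maximizer by strong concavity;
\item integrating the pointwise Hessian bound along segments.
\end{itemize}
Both are routine, but worth stating.

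\textbf{What your route buys.} It gives more information. The exact Hessian formula shows the estimate is sharp wherever $\nabla^2 H$ has eigenvalue $\ell$ at $\beta y^*(x)$. Because $\lambda\mapsto\lambda/(\nu-h\lambda)$ is increasing, it would also give a sharper one-sided estimate if the curvature of $H$ were asymmetric, since negative curvature only contributes $\ell/(\nu+h\ell)$.

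Your numerical check that the bound is at most $5M/64$ under both calibrations is identical to the paper's.
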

\begin{proof}
The inner optimality condition is
\[
 q\nabla H(\beta y^*(x))=\nu\bigl(y^*(x)-\gamma x\bigr).
\]
For two points $x,x'$, write $y=y^*(x)$ and $y'=y^*(x')$.
Subtracting their optimality conditions gives
\[
 \nu(y-y')=\nu\gamma(x-x')
       +q\bigl(\nabla H(\beta y)-\nabla H(\beta y')\bigr).
\]
Since $H$ has an $\ell$-Lipschitz gradient and $q\beta=h$,
\[
 \nu\norm{y-y'}
 \le \nu\gamma\norm{x-x'}+\ell h\norm{y-y'}.
\]
Using $\nu-\ell h=\mu$ yields
\begin{equation}\label{eq:lift-maximizer-sensitivity}
 \norm{y^*(x)-y^*(x')}
 \le \frac{\nu\gamma}{\mu}\norm{x-x'}.
\end{equation}
The primal-gradient formula in
Lemma~\ref{lem:primal-envelope-properties} and the inner optimality
condition imply
\[
 \nabla\Phi_H(x)
 =\nu\gamma\bigl(y^*(x)-\gamma x\bigr)
 =\gamma q\nabla H(\beta y^*(x)).
\]
Consequently,
\begin{align*}
 \norm{\nabla\Phi_H(x)-\nabla\Phi_H(x')}
 &\le \gamma\ell h\norm{y^*(x)-y^*(x')}\\
 &\le \frac{\nu\gamma^2\ell h}{\mu}\norm{x-x'}.
\end{align*}
For both parameter choices,
$\ell h\le\mu/4$, $\nu\le5\mu/4$, and $\gamma^2=M/(4\mu)$.
Substituting these inequalities gives
\[
 \frac{\nu\gamma^2\ell h}{\mu}
 \le \frac{1}{\mu}\frac{5\mu}{4}
                    \frac{M}{4\mu}\frac{\mu}{4}
 =\frac{5M}{64}.
\]
\end{proof}

\begin{lemma}[Equivalence of stationarity measures on the hard family]
\label{lem:envelope-comparison}
Under either calibration in
Appendices~\ref{app:bv-calibration} and~\ref{app:as-calibration},
let $P_{f_H}=(\Phi_H)_{1/(2M)}$.
Then, for every $x$,
\[
\frac{123}{128}\|\nabla P_{f_H}(x)\|
\le \|\nabla\Phi_H(x)\|
\le \frac{133}{128}\|\nabla P_{f_H}(x)\|.
\]
\end{lemma}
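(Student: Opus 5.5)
Both inequalities follow by combining the envelope gradient formula of Lemma~\ref{lem:primal-envelope-properties} with the primal smoothness bound of Proposition~\ref{prop:primal-curvature}. Write $u_x$ for the unique proximal point, so that
\[
 \nabla P_{f_H}(x)=2M(x-u_x)=\nabla\Phi_H(u_x),
 \qquad\text{hence}\qquad
 \norm{x-u_x}=\frac{\norm{\nabla P_{f_H}(x)}}{2M}.
\]
Set $\lambda:=\operatorname{Lip}(\nabla\Phi_H)\le 5M/64$ by Proposition~\ref{prop:primal-curvature}. The whole argument is a comparison of $\nabla\Phi_H(x)$ with $\nabla\Phi_H(u_x)$ through this Lipschitz bound.

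\emph{Upper bound.} By the triangle inequality,
\[
 \norm{\nabla\Phi_H(x)}
 \le\norm{\nabla\Phi_H(u_x)}+\lambda\norm{x-u_x}
 \le\Bigl(1+\frac{\lambda}{2M}\Bigr)\norm{\nabla P_{f_H}(x)}.
\]
Since $\lambda/(2M)\le 5/128$, the factor is at most $133/128$.

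\emph{Lower bound.} Symmetrically,
\[
 \norm{\nabla P_{f_H}(x)}=\norm{\nabla\Phi_H(u_x)}
 \le\norm{\nabla\Phi_H(x)}+\lambda\norm{x-u_x}
 \le\norm{\nabla\Phi_H(x)}+\frac{5}{128}\norm{\nabla P_{f_H}(x)}.
\]
Rearranging gives $\tfrac{123}{128}\norm{\nabla P_{f_H}(x)}\le\norm{\nabla\Phi_H(x)}$.

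The only substantive input is the bound $\lambda\le 5M/64$. The generic bound $M(1+M/\mu)$ from Lemma~\ref{lem:primal-envelope-properties} would give $\kappa$-dependent constants, so the hard-family estimate of Proposition~\ref{prop:primal-curvature} is essential. We must also confirm that $\inf\Phi_H>-\infty$, so that $u_x$ exists; this follows from \eqref{eq:finite-lower-bound}. Beyond these two inputs, the proof is the elementary arithmetic above, and the constants match the stated ones exactly.
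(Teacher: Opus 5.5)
Your proposal is correct and follows the same argument as the paper. Both combine $\nabla P_{f_H}(x)=\nabla\Phi_H(u_x)=2M(x-u_x)$ with the hard-family bound $\operatorname{Lip}(\nabla\Phi_H)\le 5M/64$ from Proposition~\ref{prop:primal-curvature} to get $\|\nabla\Phi_H(x)-\nabla P_{f_H}(x)\|\le\frac{5}{128}\|\nabla P_{f_H}(x)\|$, then finish with the triangle and reverse triangle inequalities; your explicit check that $\inf\Phi_H>-\infty$, so that $u_x$ exists, is left implicit in the paper.
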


\begin{proof}
Let $u_x$ minimize $\Phi_H(u)+M\|u-x\|^2$.
The Moreau-gradient identities give
$\nabla P_{f_H}(x)=\nabla\Phi_H(u_x)=2M(x-u_x)$.
By Proposition~\ref{prop:primal-curvature},
\[
\|\nabla\Phi_H(x)-\nabla P_{f_H}(x)\|
\le \frac{5M}{64}\|x-u_x\|
= \frac{5}{128}\|\nabla P_{f_H}(x)\|.
\]
The two bounds follow from the triangle and reverse
triangle inequalities.
\end{proof}

\begin{proof}[Proof of Corollary~\ref{cor:envelope-main}]
Fix either the BV or AS setting and choose exactly the same hard
instance and parameters as in its lower-bound proof in
Appendix~\ref{app:bv-calibration} or~\ref{app:as-calibration}.
An algorithm using at most $N\le T/(4p)$ oracle calls also uses at most
$T/(4p)$ rounds. Proposition~\ref{prop:canonical-hardness} therefore gives
\[
 \E\norm{\nabla\Phi_H(\hat x)}\ge5\eps,
 \qquad
 \E\norm{\nabla\Phi_H(\hat x)}^2\ge\frac{100}{3}\eps^2.
\]
Lemma~\ref{lem:envelope-comparison} holds pointwise at the same random
output $\hat x$. It follows that
\begin{align*}
 \E\norm{\nabla P_{f_H}(\hat x)}
 &\ge \frac{128}{133}\E\norm{\nabla\Phi_H(\hat x)}
 \ge \frac{640}{133}\eps>\eps,\\
 \E\norm{\nabla P_{f_H}(\hat x)}^2
 &\ge \left(\frac{128}{133}\right)^2
          \E\norm{\nabla\Phi_H(\hat x)}^2\\
 &\ge \frac{100}{3}\left(\frac{128}{133}\right)^2\eps^2
 >\eps^2.
\end{align*}
Thus neither Moreau-envelope criterion can be met within the same
oracle budget. The bounds on $T/(4p)$ already proved in
Appendices~\ref{app:bv-calibration} and~\ref{app:as-calibration} give the
two claimed complexity lower bounds, with the same parameter ranges.
The point $u_{\hat x}$ is used only in the gradient comparison; it is
neither queried nor returned by the algorithm.
\end{proof}

\end{document}